\numberwithin{equation}{section}
\def\3bar{{|\hspace{-.02in}|\hspace{-.02in}|}}
\def\E{{\mathcal{E}}}
\def\T{{\mathcal{T}}}
\def\pT{{\partial T}}
\def\bn{{\mathbf{n}}}
\def\ljump{{[\![}}
\def\rjump{{]\!]}}
\def\argmin{\operatornamewithlimits{arg\ min}}
\title {Primal-Dual Weak Galerkin Finite Element Methods for Elliptic Cauchy Problems}
\author{Chunmei Wang \thanks{Department of Mathematics, Texas State University, San Marcos, TX 78666, USA. The research of Chunmei Wang was partially supported by National Science Foundation Awards DMS-1648171 and DMS-1749707.} \and Junping Wang\thanks{Division of Mathematical Sciences, National Science Foundation, Alexandria, VA 22314 (jwang@nsf.gov). The research of Junping Wang was supported by the NSF IR/D program, while working at National Science Foundation.
However, any opinion, finding, and conclusions or recommendations
expressed in this material are those of the author and do not
necessarily reflect the views of the National Science Foundation.}}
\begin{document}
%\linenumbers
\maketitle

\begin{abstract}
The authors propose and analyze a well-posed numerical scheme for a type of ill-posed elliptic Cauchy problem by using a constrained minimization approach combined with the weak Galerkin finite element method. The resulting Euler-Lagrange formulation yields a system of equations involving the original equation for the primal variable and its adjoint for the dual variable, and is thus an example of the primal-dual weak Galerkin finite element method. This new primal-dual weak Galerkin algorithm is consistent in the sense that the system is symmetric, well-posed, and is satisfied by the exact solution. A certain stability and error estimates were derived in discrete Sobolev norms,
including one in a weak $L^2$ topology. Some numerical results are reported to illustrate and validate the theory developed in the paper.
\end{abstract}

\begin{keywords} primal-dual weak Galerkin, finite element methods, elliptic Cauchy problem.
\end{keywords}

\begin{AMS}
65N30, 65N15, 65N12, 65N20
\end{AMS}

\pagestyle{myheadings}

\section{Introduction}

This paper is concerned with the development of new numerical methods for a type of Cauchy problems for the second order elliptic equation. For simplicity, we consider a model elliptic Cauchy problem that seeks
an unknown function $u=u(x)$ satisfying
\begin{equation}\label{EQ:model-problem}
\begin{split}
\Delta  u  =&f,\quad \ \text{in}\quad
\Omega,\\
u =& g_1,\quad \text{on}\quad   \Gamma_d,\\
\partial_\bn u =& g_2,\quad \text{on}\quad \Gamma_n,
\end{split}
\end{equation}
where $\Omega$ is an open bounded domain in $\mathbb R^d$ $(d=2, 3)$
with Lipschitz continuous boundary $\partial\Omega$; $\Gamma_d$ and $\Gamma_n$ are two segments of the domain boundary $\partial\Omega$; $f\in L^2(\Omega)$, the Cauchy data $g_1$ and $g_2$ are two given functions defined on the appropriate parts of the boundary. $\partial_\bn u$ stands for the directional derivative of $u=u(x)$ in the outward normal direction $\bn$ on $\partial\Omega$. The elliptic Cauchy model problem (\ref{EQ:model-problem}) consists of solving a PDE on a domain where over-specified boundary conditions are given on parts of its boundary, which can be interpreted as solving a data completion problem with missing boundary conditions on the remaining parts of the domain boundary.

The study of the elliptic Cauchy problem \eqref{EQ:model-problem} has a long history tracing back to Hadamard \cite{e17, k22, h1, h2, h3}, where a basis for the notation of a type of well-posed problems was laid out. Hadamard used the problem \eqref{EQ:model-problem} with $\Gamma_d=\Gamma_n$ to demonstrate the ill-posedness of the problem by constructing an example for which the solution does not depend continuously on the Cauchy data. The work of Hadamard and others indicates that a small perturbation or error in the data may lead to an enormous error in the numerical solution for elliptic Cauchy problems \cite{s13, s14, s15}. As shown by the Schwartz reflection principle \cite{Gilbarg-Trudinger}, the existence of solutions for arbitrary Cauchy data $g_1$ and $g_2$ is generally not guaranteed for the problem \eqref{EQ:model-problem}. But it has been shown in \cite{Andrieux} that there exists a dense subset $M$ of $H^{\frac12}(\Gamma_d)\times [H_{00}^{\frac12}(\Gamma_n)]^\prime$ such that the problem \eqref{EQ:model-problem} has a solution $u\in H^1(\Omega)$ for any Cauchy data $g_1\times g_2\in M$. It is well-known that the solution of the problem \eqref{EQ:model-problem}, if it exists, must be unique, provided that $\Gamma_d\cap\Gamma_n$ is a nontrivial portion of the domain boundary. Throughout this paper, we assume that the Cauchy data is compatible such that the solution exists; furthermore, we assume that $\Gamma_d\cap\Gamma_n$ is a nontrivial portion of the domain boundary so that the solution of the elliptic Cauchy problem (\ref{EQ:model-problem}) is unique.

The elliptic Cauchy problems arise in many areas of science and engineering, such as wave propagation, vibration, electromagnetic scattering, geophysics, cardiology, steady-state inverse heat conduction, and nondestructive testing. It is widely recognized that the Cauchy problem for Laplace's equation, and more generally for second order elliptic equations, plays a critical role in many inverse boundary value problems modeled by elliptic partial differential equations. Among some popular examples, we mention the problem arising in electrostatic or thermal imagining methods in nondestructive testing and evaluations \cite{cakoni}. In this application, the function $u=u(x)$ can be understood as the electrostatic potential in a conducting body occupying domain $\Omega$ of which only the portion $\Gamma_d=\Gamma_n$ of the boundary is accessible to measurements. The PDE model involves the Laplace equation with no source term (i.e., $f=0$). The goal in this application is to determine the shape of the inaccessible portion of the boundary from the imposed voltage $u|_{\Gamma_d}$ and the measured current (flux) $\partial_\bn u$ on $\Gamma_n$. Readers are referred to \cite{s1, s2, s3, s4, s5, s6, s7, s8, s9, s10, s11, s12, k6, k10, k11} and the references cited therein for more examples and results on elliptic Cauchy problems.

There have been some numerical methods in the literature for approximating elliptic Cauchy problems based on two strategies: (1) reformulate the problem as an equation with missing boundary data for which Tikhonov regularization is applied for a determination of the solution; (2) approximate the ill-posed problem iteratively by a sequence of well-posed problems with the same equation. In both strategies, approximations of harmonic functions, and, particularly, their boundary values, are computed by using carefully designed numerical schemes. In \cite{s16}, a numerical method for the Cauchy problem of the Laplace equation was devised based on boundary integral equations through the use of the single-layer potential function and jump relations. In \cite{s17, s18}, the authors developed a moment method and a boundary particle method. In \cite{s4, s19, s20, s21,s22,s12}, the authors proposed and analyzed several numerical methods, including the alternating iterative boundary element method, the conjugate gradient boundary element method, the boundary knot method, and the method of fundamental solutions for the elliptic Cauchy problem. In \cite{Leitao}, the authors developed two methods of level set type for solving the elliptic Cauchy problem. In \cite{Falk-Monk}, the authors introduced an optimization approach based on least squares and Tikhonov regularization techniques. A finite element method, based on an optimal control characterization of the Cauchy problem, was introduced and analyzed in \cite{r10}. In \cite{Burman, ErikBurman-EllipticCauchy}, the author developed a stabilized finite element procedure based on a general framework involving both the original equation and its adjoint. The numerical approach proposed in
\cite{Burman, ErikBurman-EllipticCauchy} is applicable to a wide class of ill-posed problems for which only weak continuous dependence is necessary. It should be noted that a variety of theoretical and applied work have also been developed for the elliptic Cauchy problem by using the Steklov-Poincare theory \cite{r2, r3, Belgacem}, regularization methods \cite{r5, r6}, quasi-reversibility method \cite{Bourgeois} or minimal error methods \cite{r8, r9}.

The goal of this paper is to devise a new numerical scheme with rigorous mathematical convergence for the elliptic Cauchy problem \eqref{EQ:model-problem} by using a newly-developed primal-dual weak Galerkin (PD-WG) finite element method  \cite{ww2016,ww2017}. The key to PD-WG is the determination of approximate solutions from the space of weak finite element functions, with the least discontinuity requirement across the boundary of each element while (weakly) satisfying the original differential equation on each element. In the following paragraph we sketch the main idea behind the PD-WG finite element method for the elliptic Cauchy problem \eqref{EQ:model-problem}, with notations to be consistent with the rest of the paper.

A function $u\in H^{1}(\Omega)$ is said to be a weak solution of the elliptic Cauchy problem \eqref{EQ:model-problem} if the following are satisfied: (i) $u|_{\Gamma_d} = g_1 \in H^{\frac12}(\Gamma_d)$, (ii) $\partial_\bn u|_{\Gamma_n}=g_2|_{\Gamma_n} \in [H_{00}^{\frac12}(\Gamma_n)]'$, and (iii) on any  control element $T\subset \Omega$, one has
\begin{equation}\label{weakform-01}
(u, \Delta v)_T - \langle u, \partial_\bn v\rangle_\pT + \langle \partial_\bn u, v\rangle_{\pT} =(f,v)_T, \qquad \forall v\in H^2(T),
\end{equation}
where $\langle\cdot, \cdot\rangle_\pT$ stands for the pairing between $H^{\frac12}(\pT)$ and $H^{-\frac12}(\pT)$. The left-hand side of \eqref{weakform-01} defines the weak Laplacian operator $\Delta_w$ introduced originally in \cite{mwy0927,ww} so that \eqref{weakform-01} can be rewritten as
\begin{equation}\label{weakform-02}
\langle\Delta_w \{u\}, v\rangle_T  =(f,v)_T, \qquad \forall v\in H^2(T),
\end{equation}
where $\{u\} = \{u|_T, u|_\pT, \bn (\partial_\bn u |_\pT)\}$ is a weak function on $T$ (see \eqref{2.1} or \cite{mwy0927,ww} for definition). In the weak Galerkin context, the space of weak functions is approximated by weak finite element space consisting of piecewise polynomials (often without any continuity requirement). The weak Laplacian operator $\Delta_w$ is correspondingly approximated by a discrete analogue denoted as $\Delta_{w,h}$ (see the definition of $\Delta_{w, r, K}$ in Section 2 for its precise definition and computation) so that the equation (\ref{EQ:model-problem}) can be discretized by
\begin{equation}\label{EQ:10-12-2015:01}
(\Delta_{w,h} u_h, w) = (f, w), \qquad \forall w\in W_{h},
\end{equation}
where $W_{h}$ is a test space and $V^g_{h}$ is the trial space consisting of weak finite element functions with proper boundary values. The discrete problem (\ref{EQ:10-12-2015:01}), however, is not well-posed unless an {\em inf-sup} condition of Babu\u{s}ka \cite{babuska} and Brezzi \cite{b1974} is satisfied. The primal-dual formulation was designed to overcome this difficulty through the use of a constrained minimization formulation which seeks $u_h\in V^g_{h}$ as a
minimizer of a prescribed non-negative quadratic functional
$J(v)=\frac12 s(v,v)$ with constraint given by the equation
(\ref{EQ:10-12-2015:01}). The functional $J(v)$ measures the level of
``continuity" of $v\in V^g_{h}$ in the sense that $v\in V^g_{h}$ is
a classical $C^1$-conforming element if and only if $s(v,v)=0$. The resulting Euler-Lagrange equation for this constraint minimization problem gives rise to a symmetric numerical algorithm involving not only the original unknown function (primal variable) $u_h$, but also a dual variable $\lambda_h$. A formal description of the scheme can then be given as follows: Find
$u_h\in V_{h}^g$ and $\lambda_h\in W_{h}$ such that
\begin{equation}\label{primal-dual-wg}
\begin{split}
s(u_h, v) + (\Delta_{w,h} v,\lambda_h) &=0,\qquad\qquad
\forall v \in V_{h}^0,\\
(\Delta_{w,h} u_h, w) &= (f, w) ,\qquad\forall
 w \in W_{h},
 \end{split}
\end{equation}
where $s(\cdot,\cdot)$ is a bilinear form in the finite element space $V_{h}$ known as the {\em stabilizer} or {\em smoother} that enforces certain weak continuity for the approximation $u_h$. Numerical schemes in the form of (\ref{primal-dual-wg}) have been named
{\em primal-dual weak Galerkin finite element methods} in
\cite{ww2016, ww2017}, and they were known as {\em stabilized finite
element methods} in \cite{Burman, ErikBurman-EllipticCauchy-SIAM02, ErikBurman-EllipticCauchy} in different finite element contexts.

Our primal-dual weak Galerkin algorithm \eqref{primal-dual-wg} has the following advantages over the existing schemes: (1) it offers a symmetric and well-posed problem for the ill-posed elliptic Cauchy problem, (2) it is consistent in the sense that the system is satisfied by the exact solution (if it exists), and (3) PD-WG works well for a wide class of PDE problems for which no traditional variational formulations are available. In addition, like other weak Galerkin and discontinuous Galerkin finite element methods, the numerical algorithms arising from PD-WG admit general finite element partitions consisting of arbitrary polygons or polyhedra.

%%%%%%%%%%%%%%%

%%%%%%%%%%%%%%%%
Throughout the paper, we follow the usual notation for Sobolev spaces and norms. For any open bounded domain $D\subset \mathbb{R}^d$ ($d$-dimensional Euclidean space) with Lipschitz continuous boundary, we use $\|\cdot\|_{s,D}$ and
$|\cdot|_{s,D}$ to denote the norm and seminorm in the Sobolev
space $H^s(D)$ for any $s\ge 0$, respectively. The inner product in
$H^s(D)$ is denoted by $(\cdot,\cdot)_{s,D}$. The space $H^0(D)$
coincides with $L^2(D)$, for which the norm and the inner product
are denoted by $\|\cdot \|_{D}$ and $(\cdot,\cdot)_{D}$,
respectively. When $D=\Omega$, we shall drop the subscript $D$ in
the norm and inner product notation. For convenience, throughout the
paper, we use ``$\lesssim$ '' to denote ``less than or equal to up
to a general constant independent of the mesh size or functions
appearing in the inequality".

The paper is organized as follows. Section \ref{Section:Hessian} is devoted to a discussion of the weak Laplacian operator as well as its discretization. In
Section \ref{Section:WGFEM}, we give a detailed description of the primal-dual weak Galerkin algorithm for the elliptic Cauchy problem (\ref{EQ:model-problem}). Section \ref{Section:stability} is devoted to the presentation of some
technical results, including the critical {\em inf-sup} condition. In Section \ref{Section:convergence}, we establish some convergence results based on the derivation of an error equation. In Section \ref{Section:L2Error}, an optimal order of error estimate is derived for the primal-dual WG finite element
approximations in a weak $L^2$ topology. Finally in Section \ref{Section:NE}, we report a series of numerical results that demonstrate the effectiveness and accuracy of the theory developed in the previous sections.

\section{Weak Laplacian and Discrete Weak Laplacian}\label{Section:Hessian}

Let $K$ be a polygonal or polyhedral element with boundary $\partial
K$. A weak function on $K$ refers to a triplet $v=\{v_0,
v_b, v_n \bn\}$ such that $v_0\in L^2(K)$, $v_b\in L^{2}(\partial
K)$ and $v_n\in L^{2}(\partial K)$. Here $\bn$ is the outward normal
direction on $\partial K$. The first component $v_0$ represents the
``value" of $v$ in the interior of $K$, and the rest, namely $v_b$ and $v_n$, are reserved for the boundary information of $v$. In application to the Laplacian operator, $v_b$ denotes the boundary value of $v$ and $v_n$ is the outward normal derivative of $v$ on $\partial K$; i.e., $v_n \approx \nabla v\cdot \bn$. In general, $v_b$ and $v_n$ are assumed to be independent of the trace of $v_0$ and
$\nabla v_0 \cdot \bn $, respectively, on $\partial K$, but the special cases of $v_b= v_0|_{\partial K}$ and $v_n= (\nabla v_0 \cdot \bn)|_{\partial K}$ are completely legitimate, and when this happens, the function $v=\{v_0, v_b, v_n \bn\}$ is uniquely determined by $v_0$ and shall be simply denoted as $v=v_0$.

Denote by $W(K)$ the space of all weak functions on $K$; i.e.,
\begin{equation}\label{2.1}
W(K)=\{v=\{v_0,v_b, v_n\bn\}: v_0\in L^2(K), v_b\in L^{2}(\partial
K), v_n\in L^{2}(\partial K) \}.
\end{equation}

The weak Laplacian, denoted by $\Delta_{w}$, is a linear
operator from $W(K)$ to the dual of $H^{2}(K)$ such that for any
$v\in W(K)$, $\Delta_w v$ is a bounded linear functional on $H^2(K)$
defined by
\begin{equation}\label{2.3}
 \langle\Delta _{ w}v,\varphi\rangle_K=(v_0,\Delta \varphi)_K-
 \langle v_b,\nabla \varphi\cdot \bn \rangle_{\partial K}+
 \langle v_n,\varphi  \rangle_{\partial K},\quad \forall \varphi\in H^2(T),
 \end{equation}
where the left-hand side of (\ref{2.3})
represents the action of the linear functional $\Delta_w v$ on
$\varphi\in H^2(K)$.

For any non-negative integer $r\ge 0$, let $P_r(K)$ be the space of
polynomials on $K$ with total degree $r$ and less. A discrete weak
Laplacian on $K$, denoted by $\Delta_{w,r,K}$, is a linear operator
from $W(K)$ to $P_r(K)$ such that for any $v\in W(K)$,
$\Delta_{w,r,K}v$ is the unique polynomial in $P_r(K)$ satisfying
\begin{equation}\label{2.4}
 (\Delta _{w,r,K} v,\varphi)_K=(v_0,\Delta \varphi)_K-
 \langle v_b,\nabla \varphi\cdot \bn \rangle_{\partial K}+
 \langle v_n,\varphi  \rangle_{\partial K},\quad \forall \varphi \in P_r(K).
 \end{equation}
For smooth $v_0\in
H^2(K)$, one may apply the usual integration by parts to the first
term on the right-hand side of (\ref{2.4}) to obtain
  \begin{equation}\label{2.4new}
 (\Delta _{w,r,K} v,\varphi)_K=(\Delta v_0,  \varphi)_K+
 \langle v_0- v_b,\nabla \varphi\cdot \bn \rangle_{\partial K}-
 \langle  \nabla v_0\cdot \bn - v_n,\varphi  \rangle_{\partial K}.
 \end{equation}
In particular, if $v_b=v_0$ and $v_n=\nabla v_0\cdot \bn$ on
$\partial K$, we have
\begin{equation}\label{2.4new2}
 (\Delta_{w,r,K} v,\varphi)_K=(\Delta v_0,  \varphi)_K,\qquad
 \forall \varphi \in P_r(K).
 \end{equation}

The notation of discrete weak Laplacian was first introduced in
\cite{mwy0927} in conjunction with the study of plate bending
problems.

\section{Primal-Dual WG Algorithm}\label{Section:WGFEM}
Let ${\cal T}_h$ be a finite element partition of the domain
$\Omega$ into polygons in 2D or polyhedra in 3D. Assume that ${\cal
T}_h$ is shape regular in the sense described as in \cite{wy3655}.
Denote by ${\mathcal E}_h$ the set of all edges or flat faces in
${\cal T}_h$ and ${\mathcal E}_h^0={\mathcal E}_h \setminus
\partial\Omega$ the set of all interior edges or flat faces. Denote
by $h_T$ the diameter of $T\in {\cal T}_h$ and $h=\max_{T\in {\cal
T}_h}h_T$ the meshsize of the finite element partition ${\cal
T}_h$.

For any given integer $k\geq 1$ and $T\in\T_h$, define a local
weak finite element space as follows:
$$
V(k,T)=\{\{v_0,v_b,v_n\bn\}:\ v_0\in P_k(T),v_b\in P_k(e), v_n\in
P_{k-1}(e), e\subset \partial T\}.
$$
By patching $V(k,T)$ over all the elements $T\in {\cal T}_h$ through
a common value $v_b$ and $v_n\bn$ on the interior interface $\E_h^0$,
we obtain a global weak finite element space:
$$
V_h=\big\{\{v_0,v_b,v_n\bn\}:\ \{v_0,v_b,v_n\bn\}|_T\in V(k,T),
\forall T\in {\cal T}_h \big\}.
$$

For any interior edge/face $e\in\E_h^0$, by definition, there
exist two elements $T_1$ and $T_2$ sharing $e$ as a common
edge/face. Thus, any finite element function $v\in V_h$ would
satisfy the following property
$$
(v_n \bn)|_{\partial T_1 \cap e} = (v_n \bn)|_{\partial T_2 \cap e},
$$
where the left-hand side (respectively, right-hand side) stands for the value of $v_n\bn$ as seen from the element $T_1$ (respectively, $T_2$). As the two normal directions are opposite to
each other, it follows that
$$
(v_n)|_{\partial T_1 \cap e} + (v_n)|_{\partial T_2 \cap e}= 0.
$$

Next, we introduce an auxiliary finite element space as follows:
$$
W_h=\{w: \ w|_T\in P_{k-2}(T),  T\in {\cal T}_h\}.
$$
Denote by ${\cal Q}_h$ the $L^2$ projection operator onto the finite element
space $W_h$. For any $v\in V_h$, the discrete weak Laplacian, denoted by $\Delta_{w,h}v$, is computed by applying the discrete weak Laplacian $\Delta_{w,k-2,T}$ to $v$ locally on each element; i.e.,
$$
(\Delta_{w,h}v)|_T = \Delta _{w,k-2,T} (v|_T).
$$

For each edge/face $e\subset\partial T$, denote by $Q_b$ and $Q_n$ the
$L^2$ projection operators onto $P_{k}(e)$ and $P_{k-1}(e)$, respectively.
Let $V_h^g$ be the hyperplane of $V_h$ consisting of all the
finite element functions with the following boundary values on
$\Gamma_d$ and $\Gamma_n$; i.e.,
$$
V_h^g=\big\{\{v_0,v_b,v_n\bn\}\in V_h: \ v_b|_{\Gamma_d}=Q_b g_1, v_n|_{\Gamma_n}=Q_n
g_2\big\}.
$$
When $g_1=0$ and $g_2=0$, the corresponding hyperplane becomes to be
a closed subspace of $V_h$, and we denote this subspace by $V_h^0$.

\medskip

Introduce a bilinear form on $V_h\times V_h$ as follows
\begin{equation*}
s(\sigma, v)=\sum_{T\in {\cal T}_h}s_T(\sigma,v),\qquad \sigma, v\in V_h,
\end{equation*}
where
\begin{equation*}
s_T(\sigma, v)=h_T^{-3}\int_{\partial T}
(\sigma_0-\sigma_b)(v_0-v_b)ds+ h_T^{-1}\int_{\partial T} (\nabla
\sigma_0 \cdot \bn-\sigma_n ) (\nabla v_0\cdot\bn -v_n)ds.
\end{equation*}

The elliptic Cauchy problem (\ref{EQ:model-problem}) can be
discretized as a constrained minimization problem as follows: {\em
Find $u_h\in V_h^g$ satisfying
\begin{equation}\label{opti}
u_h=\argmin_{v\in V_h^g, \Delta_{w,h} v= {\cal Q}_h f} \bigg(\frac{1}{2}
s(v,v) \bigg).
\end{equation}
}

As ${\cal Q}_h$ is the $L^2$ projection onto $W_h$, the operator
equation $\Delta_{w,h} v= {\cal Q}_h f$ can be rewritten as
$$
(\Delta_{w,h} v, w) = (f, w),\qquad \forall w\in W_h.
$$
By using a Lagrange multiplier $\lambda_h\in W_h$, the constrained minimization problem (\ref{opti}) can be reformulated in
the Euler-Lagrange form: {\em Find $u_h\in V_h^g$ and
$\lambda_h\in W_h$ such that
\begin{eqnarray}\label{PD-WG32-1}
s(u_h, v)+(\Delta_{w,h} v, \lambda_h)&=&0,\qquad \quad \forall v \in V^0_h,\\
(\Delta_{w,h} u_h, w)&=&(f,w), \quad \forall w\in W_h.\label{PD-WG32-2}
\end{eqnarray}
}

The equations (\ref{PD-WG32-1}) and (\ref{PD-WG32-2}) constitute the
primal-dual weak Galerkin finite element scheme for the
elliptic Cauchy  problem (\ref{EQ:model-problem}). The equation
(\ref{PD-WG32-2}) is for the primal variable $u_h$, while
(\ref{PD-WG32-1}) is a stabilized version for the dual variable $\lambda_h$. The primal and the dual equations are integrated together by the stabilizer $s(\cdot,\cdot)$. Primal-dual finite element methods have been successfully developed for the second order elliptic equation in nondivergence form in \cite{ww2016} and Fokker-Planck type equations in \cite{ww2017}. The very same approach can also be seen in Bauman \cite{Burman} for nonsymmetric, noncoercive and ill-posed problems in a different context where the method was named the {\em stabilized finite element methods}.

The following result is well-known, see for example \cite{Gilbarg-Trudinger}.

\begin{lemma}\label{unilem}  Assume that $\Omega$ is an open bounded and connected domain in $\mathbb R^d \ (d=2,3)$ with Lipschitz continuous boundary $\Gamma=\partial\Omega$. Denote by $\Gamma_d$ the portion of the Dirichlet boundary and $\Gamma_n$ the Neumann portion. Assume that $\Gamma_d\cap \Gamma_n$ is a non-trivial portion of $\Gamma$. Then, the solutions of the following elliptic Cauchy problem, if they exist, are unique
\begin{equation*}
\begin{split}
\Delta u=&f, \qquad\mbox{\ in}\quad \Omega,\\
  u=&g_1,\qquad \mbox{on}\quad   \Gamma_d,\\
 \partial_\bn u =&g_2, \qquad\mbox{on}\quad  \Gamma_n.
\end{split}
\end{equation*}
\end{lemma}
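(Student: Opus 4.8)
The plan is to exploit the linearity of the problem together with the unique continuation property of harmonic functions. Suppose $u_1$ and $u_2$ are two solutions; by linearity their difference $u=u_1-u_2\in H^1(\Omega)$ satisfies the homogeneous Cauchy problem $\Delta u=0$ in $\Omega$, $u=0$ on $\Gamma_d$, and $\partial_\bn u=0$ on $\Gamma_n$. It therefore suffices to show $u\equiv 0$ in $\Omega$. The crucial structural hypothesis is that $\Gamma_0:=\Gamma_d\cap\Gamma_n$ is a non-trivial portion of $\Gamma$, so that on an open relative piece of $\Gamma_0$ the function $u$ carries \emph{zero Cauchy data}: both $u=0$ and $\partial_\bn u=0$ there.

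First I would localize near an interior point $x_0$ of $\Gamma_0$. I would choose a small ball $B=B(x_0,\delta)$, with $\delta$ so small that $B\cap\partial\Omega\subset\Gamma_0$, and form the enlarged set $\tilde\Omega=\Omega\cup B$. Define the zero extension $\tilde u=u$ on $\Omega\cap B$ and $\tilde u=0$ on $B\setminus\overline\Omega$. The key step is to verify that $\tilde u$ is \emph{weakly harmonic} in $B$: for any $\varphi\in C_c^\infty(B)$, Green's identity on $\Omega\cap B$ gives
\begin{equation*}
\int_{\Omega\cap B}\tilde u\,\Delta\varphi\,dx=\int_{\Omega\cap B}(\Delta u)\,\varphi\,dx+\langle u,\partial_\bn\varphi\rangle_{\Gamma_0\cap B}-\langle\partial_\bn u,\varphi\rangle_{\Gamma_0\cap B},
\end{equation*}
and all three terms on the right vanish because $\Delta u=0$ in $\Omega$ while $u$ and $\partial_\bn u$ both vanish on $\Gamma_0\cap B$. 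Since the region $B\setminus\overline\Omega$ contributes nothing, I conclude that $\int_B\tilde u\,\Delta\varphi\,dx=0$ for all such $\varphi$, i.e. $\Delta\tilde u=0$ in the distributional sense on $B$.

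Next I would invoke elliptic regularity (Weyl's lemma) to deduce that $\tilde u$ is in fact real-analytic in $B$, since weak solutions of the Laplace equation, an operator with analytic coefficients, are analytic. Because $\tilde u$ vanishes identically on the open set $B\setminus\overline\Omega$, the identity theorem for real-analytic functions forces $\tilde u\equiv 0$ on the connected ball $B$, and in particular $u\equiv 0$ on the open set $\Omega\cap B$. Finally, $u$ is harmonic on the \emph{connected} domain $\Omega$ and vanishes on a nonempty open subset; the unique continuation property for the Laplacian then propagates this to $u\equiv 0$ throughout $\Omega$, which is the desired conclusion.

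The main obstacle is the rigorous justification of the weak-harmonicity of the zero extension under the low regularity available: the Cauchy data are prescribed only in trace and dual spaces ($g_1\in H^{\frac12}(\Gamma_d)$, $g_2\in [H_{00}^{\frac12}(\Gamma_n)]'$), so the boundary pairings above must be interpreted and manipulated with care, and one must ensure the extension $\tilde u$ genuinely lies in $H^1(B)$ before applying Weyl's lemma. Once weak harmonicity across $\Gamma_0$ is secured, the remaining analytic-continuation and unique-continuation steps are standard for the Laplace operator, whose every hypersurface is non-characteristic, so that Holmgren's theorem provides an equally viable alternative to the analyticity route.
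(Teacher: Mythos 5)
Your proof is correct in substance, but note that the paper itself does not prove this lemma at all: it simply declares the result ``well-known'' and cites Gilbarg--Trudinger. So there is no internal proof to match; what you have done is supply the standard argument that such a citation points to. Your route --- reduce by linearity to homogeneous Cauchy data on $\Gamma_0=\Gamma_d\cap\Gamma_n$, extend $u$ by zero across a relatively open piece of $\Gamma_0$ into a small ball $B$, verify weak harmonicity of the extension via Green's identity (all boundary terms carry zero Cauchy data), invoke Weyl's lemma to obtain real-analyticity, conclude $\tilde u\equiv 0$ on $B$ by the identity theorem since $\tilde u$ vanishes on the open set $B\setminus\overline\Omega$, and then propagate $u\equiv 0$ through the connected domain $\Omega$ by unique continuation --- is exactly the classical proof of uniqueness for the elliptic Cauchy problem. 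The two genuinely technical points are the ones you honestly flag: (i) the zero extension lies in $H^1(B)$ precisely because the trace of $u$ vanishes on $B\cap\partial\Omega\subset\Gamma_0$, and the Lipschitz regularity of $\Gamma$ is what guarantees both this extension property and that $B\setminus\overline\Omega$ is a nonempty open set; (ii) the boundary pairings in your Green's identity must be read as dualities between $H^{1/2}$ and $H^{-1/2}$, with the condition $\partial_\bn u=0$ on $\Gamma_n$ interpreted as the vanishing of the associated functional against test functions supported in $\Gamma_n$ --- which applies here because $\varphi|_{\partial\Omega}$ is compactly supported in $\Gamma_0\cap B$ whenever $\varphi\in C_c^\infty(B)$. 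With these points made precise your argument is complete, and, as you observe, Holmgren's theorem gives an equally valid alternative to the analyticity route since every hypersurface is non-characteristic for the Laplacian.
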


\begin{theorem}\label{thmunique1} Assume that $\Gamma_d\cap\Gamma_n$ contains a nontrivial portion of the domain boundary $\Gamma=\partial\Omega$ and $\Gamma_d\cap \Gamma_n\Subset \Gamma$ is a proper closed subset. Then the primal-dual weak Galerkin finite element algorithm
(\ref{PD-WG32-1})-(\ref{PD-WG32-2}) has one and only one solution pair
$(u_h; \lambda_h) \in V_h^g \times W_h$.
%
%$\Gamma_n$ is a proper subset of the domain boundary $\partial\Omega$. In the case of $\Gamma_n=\partial\Omega$, the solution $\lambda_h$ is unique in $W_h/R$.
\end{theorem}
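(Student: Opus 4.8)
The scheme (\ref{PD-WG32-1})--(\ref{PD-WG32-2}) is a square linear system on the finite-dimensional space $V_h^g\times W_h$, so existence and uniqueness are equivalent and it suffices to show that the homogeneous problem (set $f=0$, $g_1=0$, $g_2=0$, so that $V_h^g=V_h^0$) admits only the trivial solution. Thus the plan is to assume $(u_h;\lambda_h)\in V_h^0\times W_h$ satisfies $s(u_h,v)+(\Delta_{w,h}v,\lambda_h)=0$ for all $v\in V_h^0$ and $(\Delta_{w,h}u_h,w)=0$ for all $w\in W_h$, and to prove $u_h=0$ and $\lambda_h=0$ in turn, invoking Lemma \ref{unilem} twice.

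First I would extract $u_h=0$ by an energy argument. Taking $v=u_h$ in the first equation and $w=\lambda_h$ in the second and subtracting cancels the coupling term $(\Delta_{w,h}u_h,\lambda_h)$ and leaves $s(u_h,u_h)=0$. Since each contribution to $s_T$ is a nonnegative boundary square, this forces $u_0=u_b$ and $\nabla u_0\cdot\bn=u_n$ on $\pT$ for every $T\in\T_h$. Under these matching identities, (\ref{2.4new2}) gives $\Delta_{w,h}u_h|_T=\Delta u_0|_T\in W_h|_T$, so choosing $w=\Delta_{w,h}u_h$ in the second homogeneous equation yields $\Delta u_0=0$ on each $T$. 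The matching conditions, combined with the single-valuedness of $v_b$ and $v_n\bn$ across interior interfaces, upgrade $u_0$ to a globally continuous function with continuous normal derivative, hence a genuine harmonic function on $\Omega$ with $u_0|_{\Gamma_d}=u_b|_{\Gamma_d}=0$ and $\partial_\bn u_0|_{\Gamma_n}=u_n|_{\Gamma_n}=0$. This is the homogeneous elliptic Cauchy problem with vanishing Cauchy data on the nontrivial set $\Gamma_d\cap\Gamma_n$, so Lemma \ref{unilem} gives $u_0\equiv 0$, whence $u_b=0$, $u_n=0$, i.e. $u_h=0$.

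It remains to prove $\lambda_h=0$, which I expect to be the main obstacle. With $u_h=0$, the first equation reduces to $(\Delta_{w,h}v,\lambda_h)=0$ for all $v\in V_h^0$. Because $\lambda_h|_T\in P_{k-2}(T)$ is a polynomial, I may take $\varphi=\lambda_h$ in the defining identity (\ref{2.4}) and sum over $T$ to obtain $\sum_{T}\big[(v_0,\Delta\lambda_h)_T-\langle v_b,\nabla\lambda_h\cdot\bn\rangle_\pT+\langle v_n,\lambda_h\rangle_\pT\big]=0$ for all $v\in V_h^0$. The idea is then to probe this identity with independent choices of $v_0$, $v_b$, $v_n$, which are free degrees of freedom in $V_h$: taking $v_b=v_n=0$ with $v_0$ arbitrary gives $\Delta\lambda_h=0$ on each $T$; taking $v_0=v_n=0$ and varying $v_b$ (free on all interior edges and on $\Gamma\setminus\Gamma_d$) yields $\ljump\nabla\lambda_h\cdot\bn\rjump=0$ across interior interfaces and $\partial_\bn\lambda_h=0$ on $\Gamma\setminus\Gamma_d$; and taking $v_0=v_b=0$ and varying $v_n$ (free on all interior edges and on $\Gamma\setminus\Gamma_n$) yields $\ljump\lambda_h\rjump=0$ across interior interfaces and $\lambda_h=0$ on $\Gamma\setminus\Gamma_n$.

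Collecting these, $\lambda_h$ is a globally harmonic function (continuous value and continuous normal flux across all interfaces, piecewise harmonic) whose Cauchy data vanish on $\Gamma\setminus(\Gamma_d\cup\Gamma_n)$; in the standard setting this is $\Gamma\setminus(\Gamma_d\cap\Gamma_n)$, which is a nontrivial portion of $\Gamma$ precisely because $\Gamma_d\cap\Gamma_n\Subset\Gamma$ is a proper closed subset. A second application of Lemma \ref{unilem} then forces $\lambda_h\equiv 0$, completing the uniqueness proof and hence, by finite-dimensionality, also existence. The delicate points I anticipate are the careful bookkeeping of the interelement jump terms and of exactly which boundary data are unconstrained in $V_h^0$ (so that the correct homogeneous Cauchy data for the adjoint variable are recovered), and the verification that the derived interelement continuity genuinely promotes the piecewise polynomial $\lambda_h$ to a bona fide solution of the continuous Cauchy problem to which Lemma \ref{unilem} applies.
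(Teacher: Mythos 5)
Your proposal is correct and follows essentially the same route as the paper's own proof: reduce to uniqueness of the homogeneous square system; use $v=u_h$, $w=\lambda_h$ to get $s(u_h,u_h)=0$, hence $C^1$-conformity and elementwise harmonicity of $u_0$, and apply Lemma \ref{unilem}; then expand $(\Delta_{w,h}v,\lambda_h)=0$ via (\ref{2.4}) and test against $\Delta\lambda_h$ and the jumps to make $\lambda_h$ globally harmonic with vanishing Cauchy data on $(\Gamma_d\cup\Gamma_n)^c$, and apply Lemma \ref{unilem} again (the paper uses one combined test function where you probe $v_0$, $v_b$, $v_n$ separately, which is equivalent). The union-versus-intersection wrinkle you paper over with ``in the standard setting'' is inherited from the paper itself, whose proof invokes $\Gamma_d\cup\Gamma_n\Subset\Gamma$ although the theorem's hypothesis is stated for $\Gamma_d\cap\Gamma_n$.
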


\begin{proof} Since the number of equations is the same as the
number of unknowns in the system of linear equations
(\ref{PD-WG32-1})-(\ref{PD-WG32-2}), then solution existence is
equivalent to the uniqueness. To verify the uniqueness, we consider the elliptic Cauchy problem with homogeneous data (i.e., $f\equiv 0$, $g_1\equiv 0$, and $g_2\equiv 0$). If $(u_h;\lambda_h)$ is the corresponding numerical solution, then $u_h\in V_h^0$ and we may choose $v=u_h$ and $w=\lambda_h$ in (\ref{PD-WG32-1}) and (\ref{PD-WG32-2}) to obtain
$$
s(u_h, u_h)=0,
$$
which leads to $u_0=u_b$ and $\nabla u_0\cdot \bn=u_n$ on all the edges $e\in \E_h$. It follows that $u_0\in C^1(\Omega)$ is a $C^1$-conforming element. Thus, by using (\ref{2.4new2}) we obtain
\begin{equation}\label{EQ:Nov26:001}
(\Delta u_0, w)_T = (\Delta_{w,h} u_h, w)_T=0,\qquad \forall w\in
P_{k-2}(T),
\end{equation}
where we have used the equation (\ref{PD-WG32-2}). As $\Delta
u_0|_T\in P_{k-2}(T)$ on each element $T$ and $u_0 \in C^1(\Omega)$, we
then have from (\ref{EQ:Nov26:001})
$$
\Delta u_0 = 0, \qquad \mbox{in} \ \Omega,
$$
which, together with the fact that $u_0=0$ on $\Gamma_d$ and $\nabla
u_0\cdot\bn=0$ on $\Gamma_n$, yields $u_0 \equiv 0$ in $\Omega$ by Lemma \ref{unilem}.

It remains to show that $\lambda_h\equiv 0$ in $\Omega$. To this end, from the
equation (\ref{PD-WG32-1}) and the fact that $u_h\equiv 0$ in $\Omega$ we have
$$
(\Delta_{w,h} v, \lambda_h) = 0,\qquad \forall v\in V_h^0.
$$
It follows from (\ref{2.4}) that
\begin{eqnarray*}
0&=&(\Delta_{w,h} v, \lambda_h)\\
&=&\sum_{T\in\T_h}(\Delta_{w,k-2,T} v, \lambda_h)_T\\
&=&\sum_{T\in\T_h} (v_0, \Delta \lambda_h)_T -\langle
v_b,\nabla \lambda_h\cdot \bn \rangle_{\partial T} +
 \langle v_n,\lambda_h\rangle_{\partial T} \\
 &=&\sum_{T\in\T_h}(v_0, \Delta \lambda_h)_T - \sum_{e\in\E_h/\Gamma_d}
 \langle v_b, \ljump{\nabla\lambda_h}\rjump\cdot\bn_e\rangle_e +
 \sum_{e\in\E_h/\Gamma_n} \langle v_n, \ljump{\lambda_h}\rjump\rangle_e
\end{eqnarray*}
for all $v\in V_h^0$, where we have used the fact that $v_b=0$ on $\Gamma_d$ and $v_n=0$ on $\Gamma_n$. Here, $\ljump{ \lambda_h}\rjump$ is the jump across the edge/face $e\in {\cal E}_h$; more precisely, it is defined as $\ljump{\lambda_h}\rjump=\lambda_h|_{T_1}-\lambda_h|_{T_2}$ whereas $e$ is the shared edge/face of the elements $T_1$ and $T_2$ and $\ljump{\lambda_h}\rjump=\lambda_h$ whereas $e\subset\partial\Omega$. The order of $T_1$ and $T_2$ is non-essential as long as the difference
is taken in a consistent way in all the formulas. By letting $v_0= \Delta \lambda_h$ on each
element $T$ and $v_b= -\ljump{\nabla\lambda_h}\rjump\cdot\bn_e$ on each edge/face $e\in \E_h/\Gamma_d$ and
$v_n=\ljump{\lambda_h}\rjump$ on each $e\in \E_h/\Gamma_n$ in the above equation, we obtain
\begin{eqnarray}\label{EQ:Nov26:002}
\Delta \lambda_h &=& 0,\qquad \mbox{on each } T\in \T_h,\\
\ljump{\nabla\lambda_h}\rjump\cdot\bn_e & = & 0,\qquad \mbox{on each edge/face} \ e\in \E_h/\Gamma_d,
\label{EQ:Nov26:003}\\
\ljump{\lambda_h}\rjump & = & 0,\qquad \mbox{on each edge/face} \
e\in \E_h/\Gamma_n.\label{EQ:Nov26:004}
\end{eqnarray}
The equations (\ref{EQ:Nov26:003}) and (\ref{EQ:Nov26:004}) indicate
that $\lambda_h\in C^1(\Omega)$ and
$\nabla\lambda_h\cdot\bn=0$ on $\Gamma_d^c$ and $\lambda_h=0$ on $\Gamma_n^c$, where $\Gamma_d^c= \E_h/\Gamma_d$ and $\Gamma_n^c= \E_h/\Gamma_n$.
Thus, the equation (\ref{EQ:Nov26:002}) holds true in the whole
domain $\Omega$. Since $\Gamma_d \cup \Gamma_n \Subset\Gamma$ is a closed proper subset, then $\Gamma_d^c \cap \Gamma_n^c= (\Gamma_d \cup \Gamma_n)^c$ contains a nontrivial portion of $\Gamma$. Thus, from Lemma \ref{unilem}, we have $\lambda_h\equiv 0$ in $\Omega$.
This completes the proof of the theorem.
\end{proof}

\section{Stability Conditions}\label{Section:stability}
For any $v\in V_h$, let
\begin{equation}\label{norm}
\| v \|_{2,h}:=\big( \sum_{T\in {\cal T}_h}\|\Delta v_0\|^2_T
+s_T(v,v)\big)^{\frac{1}{2}}.
\end{equation}
It is easy to see that $\|\cdot\|_{2,h}$ defines a semi-norm in the
weak finite element space $V_h$. The following lemma shows that $\|
\cdot \|_{2,h}$ is indeed a norm in the subspace $V_h^0$.

\begin{lemma} The semi-norm $\|\cdot \|_{2,h}$ given as in (\ref{norm}) defines a norm in the linear space $V_h^0$.
\end{lemma}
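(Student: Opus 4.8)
The quantity $\|\cdot\|_{2,h}$ is already known to be a semi-norm, so homogeneity and the triangle inequality hold automatically; the only property that remains to be checked is positive definiteness, namely that $\|v\|_{2,h}=0$ forces $v=0$ for $v\in V_h^0$. The plan is to extract pointwise consequences from the vanishing of each nonnegative term in (\ref{norm}) and then reduce the problem to the uniqueness result of Lemma \ref{unilem}.

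Suppose $v=\{v_0,v_b,v_n\bn\}\in V_h^0$ satisfies $\|v\|_{2,h}=0$. Since both summands in (\ref{norm}) are nonnegative, I would first conclude that $\Delta v_0=0$ on every $T\in\T_h$ and that $s_T(v,v)=0$ for all $T$. Inspecting the two integral terms defining $s_T(\cdot,\cdot)$, the condition $s_T(v,v)=0$ yields $v_0=v_b$ and $\nabla v_0\cdot\bn=v_n$ on $\partial T$ for each element.

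The central step is to upgrade these elementwise identities to global regularity of $v_0$. Because $v_b$ is single-valued on the interior skeleton $\E_h^0$ and $v_0=v_b$ on $\partial T$ from both sides of each interior edge/face, $v_0$ has no jump across interfaces, so $v_0\in C^0(\Omega)$. Similarly, using that $v_n\bn$ is single-valued across $\E_h^0$ together with $\nabla v_0\cdot\bn=v_n$ --- and accounting for the sign reversal of the two outward normals --- the normal derivative of $v_0$ also matches across each interior edge/face, promoting $v_0$ to a $C^1$-conforming piecewise polynomial, i.e. $v_0\in C^1(\Omega)$. I expect this continuity bookkeeping, and the resulting transfer of the elementwise relation $\Delta v_0=0$ into $\Delta v_0=0$ throughout $\Omega$ (legitimate precisely because $C^1$-continuity removes the singular interface contributions to the distributional Laplacian), to be the main technical point; it mirrors the corresponding argument already carried out in the proof of Theorem \ref{thmunique1}.

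Finally, I would read off the Cauchy data. Membership in $V_h^0$ gives $v_b|_{\Gamma_d}=0$ and $v_n|_{\Gamma_n}=0$; combined with $v_0=v_b$ and $\nabla v_0\cdot\bn=v_n$ on the boundary, this means $v_0=0$ on $\Gamma_d$ and $\partial_\bn v_0=0$ on $\Gamma_n$. Thus $v_0$ solves the homogeneous elliptic Cauchy problem ($f\equiv0$, $g_1\equiv0$, $g_2\equiv0$), and under the standing assumption that $\Gamma_d\cap\Gamma_n$ is a nontrivial portion of $\Gamma$, Lemma \ref{unilem} forces $v_0\equiv0$ in $\Omega$. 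Propagating this back through $v_b=v_0=0$ and $v_n=\nabla v_0\cdot\bn=0$ on $\E_h$ gives $v=0$, establishing definiteness and hence that $\|\cdot\|_{2,h}$ is a norm on $V_h^0$.
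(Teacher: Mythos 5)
Your proof is correct and follows essentially the same route as the paper's own argument: vanishing of $\|v\|_{2,h}$ gives $\Delta v_0=0$ on each element together with $v_0=v_b$ and $\nabla v_0\cdot\bn=v_n$ on element boundaries, the single-valuedness of $v_b$ and $v_n\bn$ upgrades $v_0$ to a $C^1$-conforming function that is harmonic in all of $\Omega$, and the homogeneous Cauchy data on $\Gamma_d$ and $\Gamma_n$ together with the uniqueness result of Lemma \ref{unilem} force $v_0\equiv 0$, hence $v_b\equiv 0$ and $v_n\equiv 0$. The only cosmetic difference is that you invoke Lemma \ref{unilem} explicitly, whereas the paper appeals to the nontriviality of $\Gamma_d\cap\Gamma_n$ without naming the lemma; the substance is identical.
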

\begin{proof} It suffices to verify the positivity property for $\| \cdot \|_{2,h}$ in the linear space
$V_h^0$. To this end, let $v=\{v_0,v_b,v_n\}\in V_h^0$ satisfy
$\|v \|_{2,h}=0$. It follows that on each element $T$ we have
\begin{eqnarray}\label{EQ:Nov26:010}
\Delta v_0 &=& 0, \qquad \mbox{ in } T,\\
v_0-v_b & = & 0,\qquad \mbox{ on } \partial T,
\label{EQ:Nov26:011}\\
v_n - \nabla v_0\cdot\bn & = & 0,\qquad \mbox{ on } \partial T.
\label{EQ:Nov26:012}
\end{eqnarray}
Since both $v_b$ and $v_n\bn$ are single-valued on each edge/face
$e\in\E_h$, the equations (\ref{EQ:Nov26:011}) and
(\ref{EQ:Nov26:012}) imply $v_0\in C^1(\Omega)$. Hence, the
equation (\ref{EQ:Nov26:010}) holds true in the whole domain
$\Omega$. This, combined with the fact that $v_b=0$ on $\Gamma_d$ and $v_n=0$ on $\Gamma_n$, shows $v_0\equiv 0$ as $\Gamma_d\cap\Gamma_n$ contains a nontrivial portion of the domain boundary $\Gamma$. Finally, it follows from
(\ref{EQ:Nov26:011}) and (\ref{EQ:Nov26:012}) that $v_b\equiv 0$ and
$v_n\equiv 0$. This completes the proof of the lemma.
\end{proof}

The following result shows that the weak Laplacian operator is bounded with respect to the semi-norm $\|\cdot\|_{2,h}$.

\begin{lemma} The following boundedness estimate holds true for the discrete weak Laplacian $\Delta_w$:
\begin{equation}\label{EQ:November:26:800}
\|\Delta_{w,h} v\| \lesssim \|v\|_{2,h},\qquad \forall v\in V_h.
\end{equation}
\end{lemma}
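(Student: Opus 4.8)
The plan is to reduce the global estimate to a sum of element-wise bounds and to extract the right-hand side from the definition of the discrete weak Laplacian by a judicious choice of test polynomial. Since $(\Delta_{w,h}v)|_T = \Delta_{w,k-2,T}(v|_T)$ lies in $P_{k-2}(T)$, I would fix an element $T$ and take $\varphi = \Delta_{w,k-2,T}v$ as the test function in the integration-by-parts form \eqref{2.4new}. This gives
\begin{equation*}
\|\Delta_{w,k-2,T}v\|_T^2 = (\Delta v_0, \varphi)_T + \langle v_0 - v_b,\nabla\varphi\cdot\bn\rangle_{\partial T} - \langle \nabla v_0\cdot\bn - v_n, \varphi\rangle_{\partial T},
\end{equation*}
so the task reduces to bounding the three terms on the right by the two ingredients of the norm $\|v\|_{2,h}$, namely $\|\Delta v_0\|_T$ and $s_T(v,v)^{\frac12}$.

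For the volumetric term, Cauchy--Schwarz immediately yields $(\Delta v_0, \varphi)_T \le \|\Delta v_0\|_T\,\|\varphi\|_T$. For the two boundary terms I would apply Cauchy--Schwarz on $\partial T$ and then trade the factors involving $\varphi$ for the interior norm $\|\varphi\|_T$ via the standard trace and inverse inequalities for polynomials on shape-regular elements, $\|\varphi\|_{\partial T}\lesssim h_T^{-\frac12}\|\varphi\|_T$ and $\|\nabla\varphi\cdot\bn\|_{\partial T}\lesssim h_T^{-\frac32}\|\varphi\|_T$. Matching these scalings against the $h_T$-weights built into the stabilizer $s_T(\cdot,\cdot)$, the boundary terms are controlled as
\begin{equation*}
\langle v_0 - v_b,\nabla\varphi\cdot\bn\rangle_{\partial T} \lesssim \big(h_T^{-3}\|v_0-v_b\|_{\partial T}^2\big)^{\frac12}\|\varphi\|_T \le s_T(v,v)^{\frac12}\|\varphi\|_T,
\end{equation*}
and, analogously, $\langle \nabla v_0\cdot\bn - v_n, \varphi\rangle_{\partial T} \lesssim \big(h_T^{-1}\|\nabla v_0\cdot\bn - v_n\|_{\partial T}^2\big)^{\frac12}\|\varphi\|_T \le s_T(v,v)^{\frac12}\|\varphi\|_T$. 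Combining the three estimates and cancelling one factor of $\|\varphi\|_T$ produces the local bound $\|\Delta_{w,k-2,T}v\|_T \lesssim \|\Delta v_0\|_T + s_T(v,v)^{\frac12}$.

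Finally I would square this local inequality, sum over all $T\in\T_h$, and recognize the right-hand side as $\|v\|_{2,h}^2$ from \eqref{norm}; taking square roots then gives \eqref{EQ:November:26:800}. The only point demanding care is the trace-inverse inequalities with the precise powers of $h_T$ on general polygonal/polyhedral elements, since these are exactly what matches the $h_T^{-3}$ and $h_T^{-1}$ weights in $s_T(\cdot,\cdot)$; these scalings are supplied by the shape-regularity assumptions adopted from \cite{wy3655}, so no real obstacle is expected beyond invoking them. Everything else is a routine application of Cauchy--Schwarz.
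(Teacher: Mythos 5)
Your proof is correct and takes essentially the same route as the paper: both start from the integration-by-parts identity \eqref{2.4new}, apply Cauchy--Schwarz together with the polynomial trace inequality \eqref{tracenew} and the inverse inequality (with exactly the $h_T^{-3/2}$ and $h_T^{-1/2}$ scalings matching the stabilizer weights) to obtain the local bound $\|\Delta_{w,k-2,T}v\|_T \lesssim \|\Delta v_0\|_T + s_T(v,v)^{1/2}$, and then sum the squares over all elements. The only cosmetic difference is that you test with $\varphi = \Delta_{w,k-2,T}v$ from the outset, whereas the paper bounds $|(\Delta_{w,h}v,\varphi)_T|$ for arbitrary $\varphi\in P_{k-2}(T)$ and reads off the same local estimate by duality.
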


\begin{proof} From (\ref{2.4new}) we have
\begin{equation*}
(\Delta_{w,h} v,\varphi)_T  = (\Delta  v_0, \varphi)_T+
 \langle v_0-v_b,\nabla \varphi\cdot \bn \rangle_{\partial T}+
 \langle v_n-\nabla v_0\cdot \bn,\varphi  \rangle_{\partial T}
 \end{equation*}
for all $\varphi\in P_{k-2}(T)$. Now using the Cauchy-Schwarz inequality, the trace inequality (\ref{tracenew}) and the inverse inequality, we obtain
\begin{eqnarray*}
|(\Delta_{w,h} v, \varphi)_T| & \leq &|(\Delta  v_0, \varphi)_T|+
 |\langle v_0-v_b,\nabla \varphi\cdot \bn \rangle_{\partial T}|+
 |\langle v_n-\nabla v_0\cdot \bn, \varphi \rangle_{\partial
 T}|\\
 &\leq & \|\Delta  v_0\|_T \|\varphi \|_T +
 \|v_0-v_b\|_{\partial T} \|\nabla\varphi\|_{\partial T} + \|v_n-\nabla v_0\cdot
 \bn\|_{\partial T} \|\varphi\|_{\partial T}\\
 &\lesssim &\|\Delta v_0\|_T \|\varphi\|_T + h_T^{-1/2}\|v_0-v_b\|_{\partial T}
 \|\nabla\varphi\|_{T}+h_T^{-1/2}\|v_n-\nabla v_0\cdot\bn\|_{\partial T}
 \|\varphi\|_T\\
 &\lesssim & \left(\|\Delta v_0\|_T + h_T^{-3/2}\|v_0-v_b\|_{\partial T}+
 h_T^{-1/2}\|v_n-\nabla v_0\cdot\bn\|_{\partial T} \right)
 \|\varphi\|_T,
 \end{eqnarray*}
 which leads to
 $$
\|\Delta_{w,h} v\|_T \lesssim \|\Delta v_0\|_T +
h_T^{-3/2}\|v_0-v_b\|_{\partial T}+
 h_T^{-1/2}\|v_n-\nabla v_0\cdot\bn\|_{\partial T}.
 $$
Summing the square of the above inequality over all the element
$T\in\T_h$ gives rise to the estimate (\ref{EQ:November:26:800}).
This completes the proof of the lemma.
\end{proof}

\begin{lemma}\label{lembou} The following boundedness estimates hold true:
\begin{align} \label{bound1}
|s(\sigma,v)| \le & \|\sigma\|_{2,h} \| v\|_{2,h}, \qquad \sigma, v\in V_h, \\
|(\Delta_{w,h} v, w)| \lesssim & \|v\|_{2,h} \|w\|,  \qquad v\in V_h,
w\in W_h. \label{bound2}
\end{align}
\end{lemma}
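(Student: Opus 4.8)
The plan is to establish the two estimates separately; both reduce to the Cauchy--Schwarz inequality combined with results already in hand. For the stabilizer bound (\ref{bound1}), the key observation is that $s(\cdot,\cdot)$ is a symmetric positive semi-definite bilinear form on $V_h$, since setting $\sigma=v$ gives
\[
s_T(v,v) = h_T^{-3}\|v_0-v_b\|_{\partial T}^2 + h_T^{-1}\|\nabla v_0\cdot\bn - v_n\|_{\partial T}^2 \ge 0 .
\]
Because $s$ is therefore a genuine semi-inner product, the Cauchy--Schwarz inequality applies directly and yields $|s(\sigma,v)| \le s(\sigma,\sigma)^{1/2}\, s(v,v)^{1/2}$. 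To close against the norm, I would note from the definition (\ref{norm}) that $\|v\|_{2,h}^2 = \sum_{T\in\T_h}\|\Delta v_0\|_T^2 + s(v,v)$, so that $s(v,v)\le\|v\|_{2,h}^2$ and likewise for $\sigma$; combining these gives (\ref{bound1}) with constant one. An equivalent route, closer to the element-wise style of the preceding lemmas, is to apply Cauchy--Schwarz to each of the two boundary terms of $s_T$, recombine via the discrete Cauchy--Schwarz inequality to obtain $|s_T(\sigma,v)|\le s_T(\sigma,\sigma)^{1/2} s_T(v,v)^{1/2}$, and then sum over $T$ with one further use of Cauchy--Schwarz.

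For the discrete weak Laplacian bound (\ref{bound2}), I would simply invoke the preceding boundedness lemma. Since $\Delta_{w,h}v\in W_h$ and $w\in W_h$, the Cauchy--Schwarz inequality in $L^2(\Omega)$ gives $|(\Delta_{w,h}v,w)|\le \|\Delta_{w,h}v\|\,\|w\|$, and the estimate (\ref{EQ:November:26:800}) then bounds $\|\Delta_{w,h}v\|\lesssim\|v\|_{2,h}$, from which (\ref{bound2}) follows at once.

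Neither estimate presents a genuine obstacle. The only point requiring care is the bookkeeping of the mesh-dependent weights $h_T^{-3}$ and $h_T^{-1}$ in the semi-inner product structure of $s_T$: in the element-wise argument these weights must be split symmetrically between the two factors, $h_T^{-3/2}\|\sigma_0-\sigma_b\|_{\partial T}$ against $h_T^{-3/2}\|v_0-v_b\|_{\partial T}$ and similarly for the normal-derivative terms, so that the weighted Cauchy--Schwarz step reproduces exactly the weights appearing in $s_T(\sigma,\sigma)$ and $s_T(v,v)$. Once this is handled, both bounds reduce to standard inequalities.
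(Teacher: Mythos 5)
Your proposal is correct and follows essentially the same route as the paper: Cauchy--Schwarz for the stabilizer bound (\ref{bound1}), and Cauchy--Schwarz in $L^2$ combined with the boundedness estimate (\ref{EQ:November:26:800}) of the preceding lemma for (\ref{bound2}). The only cosmetic difference is that you package (\ref{bound1}) as the abstract Cauchy--Schwarz inequality for the positive semi-definite form $s(\cdot,\cdot)$ followed by $s(v,v)\le \|v\|_{2,h}^2$ from (\ref{norm}), whereas the paper carries out the equivalent weighted, term-by-term Cauchy--Schwarz over the element boundary integrals---precisely the ``element-wise'' alternative you also sketch.
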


\begin{proof}
To derive (\ref{bound1}), we use the Cauchy-Schwarz inequality to
obtain
\begin{eqnarray*}
|s(\sigma,v)| &=& \Big |\sum_{T\in {\cal T}_h}h_T^{-3}\langle
\sigma_0-\sigma_b, v_0-v_b\rangle_\pT+ h_T^{-1}\langle \nabla
\sigma_0 \cdot \bn-\sigma_n, \nabla v_0\cdot\bn -v_n\rangle_\pT
\Big | \\
&\leq & \left(\sum_{T\in {\cal T}_h}h_T^{-3}
\|\sigma_0-\sigma_b\|_\pT^2\right)^{1/2}  \left(\sum_{T\in {\cal T}_h}h_T^{-3} \|v_0-v_b\|_\pT^2\right)^{1/2}   \\
& & +   \left(\sum_{T\in {\cal T}_h} h_T^{-1}\|\nabla \sigma_0 \cdot
\bn-\sigma_n\|_\pT^2\right)^{1/2}\left(\sum_{T\in {\cal T}_h}
h_T^{-1}\|\nabla v_0 \cdot
\bn-v_n\|_\pT^2\right)^{1/2} \\
&\leq& \| \sigma\|_{2,h} \|v\|_{2,h}.
\end{eqnarray*}
As to (\ref{bound2}), we use  the Cauchy-Schwarz inequality  and the boundedness estimate
(\ref{EQ:November:26:800}) to obtain
\begin{equation*}
|(\Delta_{w,h} v,w)| \leq   \left(\sum_{T\in {\cal T}_h} \| \Delta_{w,h} v\|_T^2 \right)^{1/2}
 \left(\sum_{T\in {\cal T}_h}\|w\|_T^2\right)^{1/2} \lesssim \|v\|_{2,h}
\|w\|.
\end{equation*}
This completes the proof of the lemma.
\end{proof}

The kernel of the weak Laplacian in $V_h$ is a subspace given by
$$
Z_h= \{v\in V_h: \ \Delta_{w,h} v = 0\}.
$$
For any $v\in Z_h$, we have $\Delta_{w,h} v=0$ so that on each element
$T\in\T_h$
$$
(\Delta_{w,h} v, \varphi)_T = 0\qquad \forall \varphi\in P_{k-2}(T).
$$
From (\ref{2.4new}) we have
\begin{equation*}
0=(\Delta_{w,h} v,\varphi)_T  = (\Delta  v_0, \varphi)_T+
 \langle v_0-v_b,\nabla \varphi\cdot \bn \rangle_{\partial T}+
 \langle v_n-\nabla v_0\cdot \bn,\varphi  \rangle_{\partial T}.
 \end{equation*}
It follows that
\begin{equation*}
(\Delta  v_0, \varphi)_T=
 \langle v_b-v_0,\nabla \varphi\cdot \bn \rangle_{\partial T}+
 \langle \nabla v_0\cdot \bn-v_n,\varphi  \rangle_{\partial T}.
 \end{equation*}
Using  the Cauchy-Schwarz inequality, the trace inequality (\ref{tracenew}) and the inverse inequality, we
arrive at
\begin{eqnarray*}
|(\Delta v_0, \varphi)_T| & \leq &
 |\langle v_b-v_0,\nabla \varphi\cdot \bn \rangle_{\partial T}|+
 |\langle \nabla v_0\cdot \bn - v_n, \varphi \rangle_{\partial
 T}|\\
 &\leq &
 \|v_0-v_b\|_{\partial T} \|\nabla\varphi\|_{\partial T} + \|v_n-\nabla v_0\cdot
 \bn\|_{\partial T} \|\varphi\|_{\partial T}\\
 &\lesssim & h_T^{-1/2}\|v_0-v_b\|_{\partial T}
 \|\nabla\varphi\|_{T}+h_T^{-1/2}\|v_n-\nabla v_0\cdot\bn\|_{\partial T}
 \|\varphi\|_T\\
 &\lesssim & \left(h_T^{-3/2}\|v_0-v_b\|_{\partial T}+
 h_T^{-1/2}\|v_n-\nabla v_0\cdot\bn\|_{\partial T} \right)
 \|\varphi\|_T.
 \end{eqnarray*}
 Thus, we have
 \begin{equation}\label{EQ:November:26:801}
\|\Delta v_0\|_T \lesssim h_T^{-3/2}\|v_0-v_b\|_{\partial T}+
 h_T^{-1/2}\|v_n-\nabla v_0\cdot\bn\|_{\partial T}
\end{equation}
for any $v\in Z_h$. Summing (\ref{EQ:November:26:801}) over all the
element $T\in\T_h$ yields
\begin{equation}\label{EQ:November:26:802}
\sum_{T\in {\cal T}_h}\|\Delta v_0\|_T^2 \lesssim s(v,v),\qquad v\in Z_h.
\end{equation}
Consequently, we have proved the following coercivity result for the
bilinear form $s(\cdot,\cdot)$ in $Z_h\times Z_h$.

\begin{lemma}\label{lemcoe} There exists a constant $\alpha>0$ such that
\begin{equation}\label{coer}
s(v,v)\geq \alpha \| v\|_{2,h}^2, \qquad \forall v\in Z_h.
\end{equation}
\end{lemma}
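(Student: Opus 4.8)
The plan is to observe that this coercivity estimate is essentially an immediate consequence of the inequality \eqref{EQ:November:26:802}, which has already been established in the running discussion preceding the lemma. Recall that by the very definition \eqref{norm} of the semi-norm, for any $v\in V_h$ we have the exact decomposition
\begin{equation*}
\|v\|_{2,h}^2 = \sum_{T\in\T_h}\|\Delta v_0\|_T^2 + s(v,v),
\end{equation*}
so the only thing standing between $s(v,v)$ and $\|v\|_{2,h}^2$ is the first sum. The whole content of the lemma is therefore to control that first sum by $s(v,v)$, and this is exactly what \eqref{EQ:November:26:802} provides for $v\in Z_h$.

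Concretely, I would first record that \eqref{EQ:November:26:802} furnishes a fixed constant $C_0>0$, depending only on the shape-regularity of $\T_h$ (through the trace and inverse inequalities used to derive \eqref{EQ:November:26:801}), such that
\begin{equation*}
\sum_{T\in\T_h}\|\Delta v_0\|_T^2 \le C_0\, s(v,v), \qquad \forall v\in Z_h.
\end{equation*}
Substituting this into the decomposition above gives
\begin{equation*}
\|v\|_{2,h}^2 = \sum_{T\in\T_h}\|\Delta v_0\|_T^2 + s(v,v) \le (C_0+1)\, s(v,v), \qquad \forall v\in Z_h,
\end{equation*}
and hence the claim holds with $\alpha=(C_0+1)^{-1}>0$. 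No further estimation is needed.

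Since all the analytic work has already been carried out to obtain \eqref{EQ:November:26:802}, there is no genuine obstacle at this stage; the proof is a one-line rearrangement. If anything, the only point meriting a word of care is the logical dependence: the estimate is valid \emph{only} on the kernel subspace $Z_h$, because the derivation of \eqref{EQ:November:26:801} used the defining property $\Delta_{w,h}v=0$ (equivalently $(\Delta_{w,h}v,\varphi)_T=0$ for all $\varphi\in P_{k-2}(T)$) to express $(\Delta v_0,\varphi)_T$ purely in terms of the two jump quantities appearing in $s_T(v,v)$. I would make sure to state explicitly that $v\in Z_h$ is essential and that the constant $\alpha$ is independent of the mesh size, matching the convention fixed in the introduction.
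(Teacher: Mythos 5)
Your proof is correct and matches the paper's own argument: the paper likewise treats the lemma as an immediate consequence of the estimate \eqref{EQ:November:26:802} derived just before it, combined with the definition \eqref{norm} of $\|\cdot\|_{2,h}$. Your explicit bookkeeping of the constant $\alpha=(C_0+1)^{-1}$ and the remark that the restriction to $Z_h$ is essential (since \eqref{EQ:November:26:801} uses $\Delta_{w,h}v=0$) are exactly the points implicit in the paper's derivation.
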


In the auxiliary finite element space $W_h$, we introduce the
following norm
\begin{equation}\label{EQ:NormInWh}
\|\lambda\|_{0,h} = \left(\sum_{T\in\T_h} h_T^4\|\Delta
\lambda\|_T^2 + \sum_{e\in(\E_h/\Gamma_d)} h_e^3
\|\ljump{\nabla\lambda}\rjump\|_e^2 + \sum_{e\in(\E_h/\Gamma_n)} h_e
\|\ljump{\lambda}\rjump\|_e^2\right)^{1/2},
\end{equation}

\begin{lemma} \label{leminf}(inf-sup condition) For any $\lambda \in W_h$, there exists $v^*\in V_h^0$ satisfying
\begin{align}\label{inf1}
 (\Delta_{w,h} v^*, \lambda)&  = \|\lambda\|_{0,h}^2,  \\
 \|v^*\|_{2,h}^2& \lesssim \|\lambda\|^2_{0,h}.\label{inf2}
\end{align}
\end{lemma}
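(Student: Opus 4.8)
The plan is to construct $v^*\in V_h^0$ explicitly, designing it so that expanding the discrete weak Laplacian via the defining identity \eqref{2.4} element-by-element reproduces \emph{exactly} the three terms in $\|\lambda\|_{0,h}^2$. Guided by the rearrangement carried out in the proof of Theorem \ref{thmunique1}, I would set, on each $T\in\T_h$ and each interior edge/face $e$,
\begin{align*}
v_0^*|_T &= h_T^4\,\Delta\lambda, \\
v_b^*|_e &= -h_e^3\,\ljump{\nabla\lambda}\rjump\cdot\bn_e \quad (e\in\E_h/\Gamma_d), \\
v_n^*|_e &= h_e\,\ljump{\lambda}\rjump \quad (e\in\E_h/\Gamma_n),
\end{align*}
together with $v_b^*=0$ on $\Gamma_d$ and $v_n^*=0$ on $\Gamma_n$. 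Since $\lambda|_T\in P_{k-2}(T)$, one has $\Delta\lambda\in P_k(T)$, $\ljump{\nabla\lambda}\rjump\cdot\bn_e\in P_{k-3}(e)\subset P_k(e)$ and $\ljump{\lambda}\rjump\in P_{k-2}(e)\subset P_{k-1}(e)$, so $v^*=\{v_0^*,v_b^*,v_n^*\bn\}$ is a legitimate element of $V_h$; the prescribed boundary values on $\Gamma_d$ and $\Gamma_n$ guarantee $v^*\in V_h^0$.

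To verify \eqref{inf1}, I would write $(\Delta_{w,h}v^*,\lambda)=\sum_{T}(\Delta_{w,k-2,T}v^*,\lambda)_T$ and apply \eqref{2.4}, then regroup the boundary integrals over $\partial T$ into edge integrals carrying the jumps $\ljump{\nabla\lambda}\rjump\cdot\bn_e$ and $\ljump{\lambda}\rjump$, precisely as in the derivation following \eqref{EQ:Nov26:001}. Because $v^*\in V_h^0$ annihilates the contributions on $\Gamma_d$ and $\Gamma_n$, this yields
\begin{equation*}
(\Delta_{w,h}v^*,\lambda)=\sum_{T\in\T_h}(v_0^*,\Delta\lambda)_T-\sum_{e\in\E_h/\Gamma_d}\langle v_b^*,\ljump{\nabla\lambda}\rjump\cdot\bn_e\rangle_e+\sum_{e\in\E_h/\Gamma_n}\langle v_n^*,\ljump{\lambda}\rjump\rangle_e.
\end{equation*}
Substituting the three defining formulas turns the right-hand side into $\sum_T h_T^4\|\Delta\lambda\|_T^2+\sum_{e\in\E_h/\Gamma_d}h_e^3\|\ljump{\nabla\lambda}\rjump\|_e^2+\sum_{e\in\E_h/\Gamma_n}h_e\|\ljump{\lambda}\rjump\|_e^2=\|\lambda\|_{0,h}^2$, which is exactly \eqref{inf1}.

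For \eqref{inf2} I would bound each of the three pieces of $\|v^*\|_{2,h}^2$ in \eqref{norm} against the matching piece of $\|\lambda\|_{0,h}^2$. The volume term is controlled by the inverse inequality, $\|\Delta v_0^*\|_T=h_T^4\|\Delta^2\lambda\|_T\lesssim h_T^2\|\Delta\lambda\|_T$, so $\sum_T\|\Delta v_0^*\|_T^2\lesssim\sum_T h_T^4\|\Delta\lambda\|_T^2$. For the stabilizer $s_T(v^*,v^*)$ I would split $v_0^*-v_b^*$ and $\nabla v_0^*\cdot\bn-v_n^*$ and estimate each summand: the $v_0^*$ contributions are handled by the trace inequality \eqref{tracenew} and the inverse inequality (e.g. $h_T^{-3}\|v_0^*\|_{\partial T}^2\lesssim h_T^{-4}\|v_0^*\|_T^2\lesssim h_T^4\|\Delta\lambda\|_T^2$, and likewise $h_T^{-1}\|\nabla v_0^*\cdot\bn\|_{\partial T}^2\lesssim h_T^4\|\Delta\lambda\|_T^2$), while the $v_b^*$ and $v_n^*$ contributions reduce, using $h_e\sim h_T$, to $h_e^3\|\ljump{\nabla\lambda}\rjump\|_e^2$ and $h_e\|\ljump{\lambda}\rjump\|_e^2$ respectively. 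Summing over all elements and edges (each interior edge counted at most twice) gives $\|v^*\|_{2,h}^2\lesssim\|\lambda\|_{0,h}^2$.

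The work here is bookkeeping rather than conceptual: the main obstacle is choosing the powers of $h_T$ in the ansatz so that \eqref{inf1} holds as an \emph{equality} and not merely as a comparison. The weights $h_T^4$, $h_e^3$, $h_e$ are forced by matching the definition \eqref{EQ:NormInWh} of $\|\cdot\|_{0,h}$, and one must simultaneously check that these same weights, once passed through the trace and inverse inequalities, land back on the very same three quantities for \eqref{inf2}; the consistency of these two scaling requirements is the delicate point. A minor but necessary convention to record is that the middle term of $\|\cdot\|_{0,h}$ is read through the normal component $\ljump{\nabla\lambda}\rjump\cdot\bn_e$ of the gradient jump, which is the quantity produced by the identity \eqref{2.4}, so that \eqref{inf1} is an exact equality.
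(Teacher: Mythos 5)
Your proposal is correct and follows essentially the same route as the paper: the identical ansatz $v_0^*=h_T^4\Delta\lambda$, $v_b^*=-h_e^3\ljump{\nabla\lambda}\rjump\cdot\bn_e$, $v_n^*=h_e\ljump{\lambda}\rjump$ (with zero data on $\Gamma_d$, $\Gamma_n$), the same edge-wise regrouping of \eqref{2.4} to get \eqref{inf1} as an exact equality, and trace/inverse inequalities for \eqref{inf2}, whose details the paper merely leaves as an exercise but you carry out correctly. Your remark that the middle term of \eqref{EQ:NormInWh} must be read through the normal component $\ljump{\nabla\lambda}\rjump\cdot\bn_e$ is a legitimate and worthwhile clarification of a point the paper glosses over.
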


\begin{proof} On each element $T\in\T_h$, from (\ref{2.4}) we have
\begin{equation*}
(\Delta_{w, k-2, T} v,\lambda)_T  = ( v_0, \Delta \lambda)_T-
 \langle v_b,\nabla \lambda\cdot \bn \rangle_{\pT}+
 \langle v_n,\lambda \rangle_{\pT}
 \end{equation*}
for any $v\in V_h^0$. Summing over all $T\in\T_h$ yields
\begin{eqnarray*}
(\Delta_{w,h} v, \lambda)&=&\sum_{T\in\T_h}(\Delta_{w,k-2,T} v, \lambda)_T\\
&=&\sum_{T\in\T_h} (v_0, \Delta \lambda)_T -\langle v_b,\nabla
\lambda\cdot \bn \rangle_{\pT} +\langle v_n,\lambda\rangle_{\pT} \\
 &=&\sum_{T\in\T_h}(v_0, \Delta \lambda)_T - \sum_{e\in(\E_h/\Gamma_d)}
 \langle v_b, \ljump{\nabla\lambda}\rjump\cdot\bn_e\rangle_e +
 \sum_{e\in(\E_h/\Gamma_n)} \langle v_n, \ljump{\lambda}\rjump\rangle_e,
\end{eqnarray*}
where we have used $v_b=0$ on $\Gamma_d$ and $v_n=0$ on $\Gamma_n$.
By setting
\begin{eqnarray*}
v^*_0 &=& h_T^4 \Delta \lambda,\qquad\qquad\ \mbox{in } \ T\in\T_h,\\
v^*_b &=& -h_e^3\ljump{\nabla\lambda}\rjump\cdot\bn_e,\quad
\mbox{on}\ e\in \E_h/\Gamma_d,\\
v^*_n &=& h_e \ljump{\lambda}\rjump,\qquad\qquad\
\mbox{on}\;\; e\in \E_h/\Gamma_n,\\
v^*_b &=& 0,\qquad\qquad\qquad
\mbox{on}\; e\in \Gamma_d,\\
v^*_n &=& 0,\qquad\qquad\qquad
\mbox{on}\; e\in \Gamma_n,\\
\end{eqnarray*}
we have a weak finite element function $v^*=\{v_0^*, v_b^*, v_n^*\bn\}\in
V_h^0$ satisfying
\begin{eqnarray*}
(\Delta_{w,h} v^*, \lambda)=\sum_{T\in\T_h} h_T^4\|\Delta \lambda\|_T^2
+ \sum_{e\in(\E_h/\Gamma_d)} h_e^3
\|\ljump{\nabla\lambda}\rjump\|_e^2 + \sum_{e\in(\E_h/\Gamma_n)} h_e
\|\ljump{\lambda}\rjump\|_e^2,
\end{eqnarray*}
which leads to (\ref{inf1}). The boundedness estimate (\ref{inf2})
can be verified by using the norm definition (\ref{norm}) and the
standard inverse inequality without any difficulty; details are left to interested readers as an exercise. This completes the proof of the lemma.
\end{proof}

\section{Convergence Analysis}\label{Section:convergence}
The goal of this section is to establish a convergence theory for the
solution of the primal-dual weak Galerkin algorithm
(\ref{PD-WG32-1})-(\ref{PD-WG32-2}) under the assumption that the
continuous problem (\ref{EQ:model-problem}) has a solution $u$
that is sufficiently regular for us to perform all the necessary
mathematical operations and estimates.

\subsection{Error equations}
On each element $T\in\T_h$, denote by $Q_0$ the $L^2$ projection
operator onto $P_k(T)$. For any $\theta\in H^2(\Omega)$, denote by $Q_h
\theta$ the $L^2$ projection onto the weak finite element space
$V_h$ such that on each element $T$,
$$
Q_h\theta=\{Q_0\theta,Q_b\theta, Q_n(\nabla \theta\cdot \bn)\bn\}.
$$
As was shown in \cite{mwy0927}, the following commutative property
holds true:
\begin{equation}\label{EQ:CommutativeProperty}
\Delta _{w,h}(Q_h \theta) = {\cal Q}_h(\Delta \theta),\qquad \theta
\in H^2(T).
\end{equation}

Now let $(u_h;\lambda_h)\in V^g_h \times W_h$
be the numerical solution arising from the primal-dual weak Galerkin
algorithm (\ref{PD-WG32-1})-(\ref{PD-WG32-2}). Denote the error
functions by
\begin{align}\label{error}
e_h&=u_h-Q_h u,\\
\epsilon_h&=\lambda_h - 0.\label{error-2}
\end{align}
Note that the Lagrange multiplier is trivial ($\lambda=0$) for the continuous problem.

\begin{lemma}\label{errorequa}
Let $u$ be the solution of the elliptic Cauchy problem
(\ref{EQ:model-problem}) and $(u_h;\lambda_h)\in V^g_h \times W_h$ be
its numerical approximation arising from the primal-dual weak
Galerkin algorithm (\ref{PD-WG32-1})-(\ref{PD-WG32-2}). Then, the error functions $e_h$ and $\epsilon_h$ defined in (\ref{error})-(\ref{error-2}) satisfy the following equations
\begin{eqnarray}\label{sehv}
 s( e_h, v)+(\Delta_{w,h} v, \epsilon_h) &=&-s(Q_h u, v),\qquad \forall v\in V_h^{0},\\
 (\Delta_{w,h} e_h, w)&=&0,\qquad\qquad\qquad \forall w\in W_h. \label{sehv2}
\end{eqnarray}
\end{lemma}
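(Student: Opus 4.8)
The plan is to obtain both identities by direct substitution of the definitions $e_h = u_h - Q_h u$ and $\epsilon_h = \lambda_h$ into the two scheme equations (\ref{PD-WG32-1})--(\ref{PD-WG32-2}), using only the bilinearity of $s(\cdot,\cdot)$, the linearity of $\Delta_{w,h}$, and the commutative property (\ref{EQ:CommutativeProperty}). No delicate estimates are needed; the content of the lemma is the consistency (Galerkin orthogonality) of the method.

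For (\ref{sehv}), I would start from the dual scheme equation (\ref{PD-WG32-1}), $s(u_h, v) + (\Delta_{w,h} v, \lambda_h) = 0$ for all $v \in V_h^0$. Writing $u_h = e_h + Q_h u$ and $\lambda_h = \epsilon_h$, and expanding $s(u_h, v) = s(e_h, v) + s(Q_h u, v)$ by bilinearity, a rearrangement of terms gives exactly $s(e_h, v) + (\Delta_{w,h} v, \epsilon_h) = -s(Q_h u, v)$ for all $v \in V_h^0$.

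For (\ref{sehv2}), I would expand $(\Delta_{w,h} e_h, w) = (\Delta_{w,h} u_h, w) - (\Delta_{w,h} Q_h u, w)$ by linearity. The first term equals $(f, w)$ by the primal scheme equation (\ref{PD-WG32-2}). For the second term, the commutative property (\ref{EQ:CommutativeProperty}) (which requires $u \in H^2(\Omega)$, as guaranteed by the standing regularity assumption of this section) yields $\Delta_{w,h}(Q_h u) = {\cal Q}_h(\Delta u)$; since $u$ solves the continuous problem we may substitute $\Delta u = f$ to obtain ${\cal Q}_h f$, and since ${\cal Q}_h$ is the $L^2$ projection onto $W_h$ and $w \in W_h$, we have $({\cal Q}_h f, w) = (f, w)$. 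Subtracting the two identical terms gives $(\Delta_{w,h} e_h, w) = 0$ for all $w \in W_h$.

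The argument is entirely algebraic, so there is no real analytic obstacle; the only step to handle with care is the invocation of (\ref{EQ:CommutativeProperty}) together with the substitution $\Delta u = f$. This is precisely the step that encodes consistency: it shows that $Q_h u$ satisfies the constraint $\Delta_{w,h}(Q_h u) = {\cal Q}_h f$, so that the exact solution is reproduced by the scheme with trivial multiplier $\lambda = 0$, which also justifies the choice $\epsilon_h = \lambda_h - 0$ in (\ref{error-2}).
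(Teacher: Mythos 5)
Your proposal is correct and follows essentially the same argument as the paper's own proof: the first error equation is obtained by subtracting $s(Q_h u, v)$ from both sides of (\ref{PD-WG32-1}) and using bilinearity, and the second follows from the chain $(\Delta_{w,h} e_h, w) = (f,w) - ({\cal Q}_h \Delta u, w) = (f,w) - ({\cal Q}_h f, w) = 0$, exactly as in the paper. Your closing remark correctly identifies the commutative property (\ref{EQ:CommutativeProperty}) combined with $\Delta u = f$ as the consistency mechanism that makes the lemma work.
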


\begin{proof}
First, by subtracting $s(Q_hu, v)$ from both sides of (\ref{PD-WG32-1}) we have
\begin{equation*}
\begin{split}
s( u_h-Q_h u, v)+(\Delta_{w,h} v, \lambda_h) =-s(Q_h u, v), \qquad
\forall v\in V_h^0,
\end{split}
\end{equation*}
which leads to the first error equation (\ref{sehv}).
As to (\ref{sehv2}), we use the equation (\ref{PD-WG32-2}) and the
commutative property (\ref{EQ:CommutativeProperty}) to obtain
\begin{eqnarray*}
 (\Delta_{w,h} e_h, w) & = & (\Delta_{w,h}(u_h-Q_hu), w)\\
 &= & (\Delta_{w,h} u_h, w) - (\Delta_{w,h} Q_h u, w) \\
 & = & (f, w) - ( {\cal{Q}}_h \Delta u, w) \\
 & = & (f,w) - ( {\cal{Q}}_h f, w)\\
& = & 0
\end{eqnarray*}
for all $w\in W_h$. This completes the proof of the lemma.
\end{proof}

\subsection{Error estimates}
Assume that the finite element partition ${\cal T}_h$ is
shape-regular. Thus, on each $T\in {\cal T}_h$ the
following trace inequality holds true \cite{wy3655}:
\begin{equation}\label{tracein}
 \|\phi\|^2_{\partial T} \lesssim   h_T^{-1}\|\phi\|_T^2+h_T \|\nabla
 \phi\|_T^2,\quad \phi\in H^1(T).
\end{equation}
If $\phi$ is additionally a polynomial function on the element $T\in
{\cal T}_h$, we have from (\ref{tracein}) and the inverse inequality
(see \cite{wy3655} for details on arbitrary polygonal elements) that
\begin{equation}\label{tracenew}
 \|\phi\|^2_{\partial T} \lesssim  h_T^{-1}\|\phi\|_T^2.
\end{equation}

The following results can be found in \cite{mwy3655}.

\begin{lemma} Let ${\cal T}_h$ be shape regular. Then, for any $0\leq s \leq 2$ and
$1\leq m \leq k$, one has
\begin{equation}\label{error1}
 \sum_{T\in {\cal T}_h}h_T^{2s}\|u-Q_0u\|^2_{s,T}\lesssim   h^{2(m+1)}\|u\|^2_{m+1},
\end{equation}
\begin{equation}\label{error3}
 \sum_{T\in {\cal T}_h}h_T^{2s}\|u-{\cal Q}_hu\|^2_{s,T}\lesssim   h^{2(m-1)}\|u\|^2_{m-1}.
\end{equation}
 \end{lemma}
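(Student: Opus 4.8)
The plan is to prove both estimates element-by-element and then sum over ${\cal T}_h$; both are classical approximation properties of the $L^2$ projection, so the essential work is bookkeeping the polynomial degree and the powers of $h_T$. The unifying fact I would use is that an $L^2$ projection onto $P_\ell(T)$ reproduces every polynomial of degree at most $\ell$, and hence, by the Bramble--Hilbert lemma together with a scaling argument valid on shape-regular elements (as developed in \cite{wy3655}), satisfies
\[
 \|u-P_\ell u\|_{s,T} \lesssim h_T^{\,t-s}\,\|u\|_{t,T},\qquad 0\le s\le t\le \ell+1,
\]
with a constant depending only on the shape-regularity parameters.

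For \eqref{error1} I would take $P_\ell=Q_0$ with $\ell=k$ and choose $t=m+1$; the hypotheses $1\le m\le k$ and $0\le s\le 2$ guarantee $t=m+1\le k+1=\ell+1$ and $s\le 2\le m+1=t$, so the displayed estimate applies and yields $\|u-Q_0u\|_{s,T}\lesssim h_T^{(m+1)-s}\|u\|_{m+1,T}$. Squaring and multiplying by $h_T^{2s}$, the weight $h_T^{2s}$ cancels exactly the $h_T^{-2s}$ produced by the seminorm, leaving $h_T^{2s}\|u-Q_0u\|_{s,T}^2\lesssim h_T^{2(m+1)}\|u\|_{m+1,T}^2$. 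Summing over $T\in{\cal T}_h$, bounding $h_T\le h$, and using $\sum_{T}\|u\|_{m+1,T}^2=\|u\|_{m+1}^2$ delivers \eqref{error1}. For \eqref{error3} the argument is identical with $P_\ell={\cal Q}_h$, $\ell=k-2$, and $t=m-1$: on the range where $s\le m-1\le k-1$ the projection estimate gives $\|u-{\cal Q}_hu\|_{s,T}\lesssim h_T^{(m-1)-s}\|u\|_{m-1,T}$, the factor $h_T^{2s}$ again cancels, and summation produces $h^{2(m-1)}\|u\|_{m-1}^2$.

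The step I expect to be the only genuine obstacle is the scaling estimate itself on general polygonal or polyhedral elements. Since ${\cal T}_h$ is not affine-equivalent to a single fixed reference element, one cannot simply pull back to $\hat T$ and invoke the Bramble--Hilbert lemma there; instead the local approximation bound must be obtained under the shape-regularity conditions of \cite{wy3655}, tracking that all hidden constants depend only on those parameters and not on $h_T$. This is exactly the content recorded in \cite{mwy3655}, which I would cite for the fully rigorous version of the scaling inequality, leaving the remaining summation as a routine computation.
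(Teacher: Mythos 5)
Your argument is correct in substance, but note that the paper itself does not prove this lemma at all: it is quoted as a known approximation result, with the proof deferred entirely to \cite{mwy3655}. Your reconstruction --- local Bramble--Hilbert/scaling estimates for the $L^2$ projections $Q_0$ (onto $P_k(T)$) and ${\cal Q}_h$ (onto $P_{k-2}(T)$), followed by multiplication by the weight $h_T^{2s}$ and summation over $T\in{\cal T}_h$ --- is exactly the standard argument underlying the cited result, and your decision to defer the polytopal-element scaling inequality itself to \cite{wy3655}/\cite{mwy3655} mirrors what the paper does, so there is no substantive divergence between the two routes; yours simply makes the bookkeeping explicit. One point you flagged deserves emphasis: the restriction $s\le m-1$ in your treatment of \eqref{error3} is not a removable convenience but a genuine necessity, since for $s>m-1$ the stated inequality is false in general. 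For instance, with $s=2$ and $m=1$ it would assert
\begin{equation*}
\sum_{T\in {\cal T}_h} h_T^{4}\,\|u-{\cal Q}_h u\|_{2,T}^{2}\;\lesssim\;\|u\|_{0}^{2},
\end{equation*}
which fails for highly oscillatory $u$ (take $u\sim\sin(Nx_1)$ with $N\gg h^{-1}$: the polynomial part ${\cal Q}_h u$ obeys an inverse inequality but the non-polynomial part $u$ does not, so the left side grows like $h^4N^4\|u\|_0^2$). Hence the lemma must be read with the implicit constraint $s\le m-1$ for \eqref{error3} (the analogous constraint $s\le m+1$ for \eqref{error1} is automatic from $s\le 2$ and $m\ge 1$), which is consistent with how the estimates are actually invoked in Section \ref{Section:L2Error}; your proof covers precisely this meaningful range.
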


\begin{theorem} \label{theoestimate}
Let $u$ be the exact solution of the elliptic Cauchy problem
(\ref{EQ:model-problem}), and $(u_h;\lambda_h)\in V_h^g \times W_h$
be its numerical approximation arising from the primal-dual weak
Galerkin algorithm (\ref{PD-WG32-1})-(\ref{PD-WG32-2}) of order
$k\ge 2$. Assume that the exact solution is so regular that $u\in
H^{k+1}(\Omega)$. Then, the following error estimate holds true:
 \begin{equation}\label{erres}
\|u_h - Q_h u \|_{2,h} + \| \lambda_h\|_{0,h} \lesssim h^{k-1}
\|u\|_{k+1}.
\end{equation}
\end{theorem}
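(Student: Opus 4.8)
The plan is to control the two terms of (\ref{erres}) separately: first the primal error $\|e_h\|_{2,h}$ by a coercivity argument on the kernel $Z_h$, and then the dual error $\|\lambda_h\|_{0,h}$ via the inf-sup condition of Lemma~\ref{leminf}. The starting point is to note that the second error equation (\ref{sehv2}) reads $(\Delta_{w,h}e_h,w)=0$ for all $w\in W_h$; since $\Delta_{w,h}e_h\in W_h$, this forces $\Delta_{w,h}e_h=0$, i.e. $e_h\in Z_h$. Testing the first error equation (\ref{sehv}) with $v=e_h\in V_h^0$ annihilates the multiplier term $(\Delta_{w,h}e_h,\epsilon_h)$ and leaves $s(e_h,e_h)=-s(Q_hu,e_h)$. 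Applying the coercivity of Lemma~\ref{lemcoe} on $Z_h$ together with the Cauchy--Schwarz inequality for $s(\cdot,\cdot)$ from Lemma~\ref{lembou} gives $\alpha\|e_h\|_{2,h}^2\le s(e_h,e_h)\le s(Q_hu,Q_hu)^{1/2}s(e_h,e_h)^{1/2}$, whence $\|e_h\|_{2,h}\lesssim s(Q_hu,Q_hu)^{1/2}$.

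The crux of the argument, which I expect to be the main obstacle, is the stabilizer consistency bound $s(Q_hu,Q_hu)\lesssim h^{2(k-1)}\|u\|_{k+1}^2$. I would prove it term by term from the structure of $s_T$. Since $Q_0u\in P_k(T)$ has trace in $P_k(e)$ and normal derivative $\nabla Q_0u\cdot\bn$ in $P_{k-1}(e)$ on each $e\subset\partial T$, the projections $Q_b$, $Q_n$ act as identities there, so that $Q_0u-Q_bu=Q_b(Q_0u-u)$ and $\nabla Q_0u\cdot\bn-Q_n(\nabla u\cdot\bn)=Q_n(\nabla(Q_0u-u)\cdot\bn)$ on $\partial T$. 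The $L^2$-boundedness of $Q_b$ and $Q_n$ then reduces everything to $\|Q_0u-u\|_{\partial T}$ and $\|\nabla(Q_0u-u)\|_{\partial T}$, which I would estimate by the trace inequality (\ref{tracein}) followed by the approximation estimate (\ref{error1}) with $m=k$. A direct power count yields $h_T^{-3}\|Q_0u-Q_bu\|_{\partial T}^2\lesssim h_T^{2(k-1)}\|u\|_{k+1,T}^2$ and likewise $h_T^{-1}\|\nabla Q_0u\cdot\bn-Q_n(\nabla u\cdot\bn)\|_{\partial T}^2\lesssim h_T^{2(k-1)}\|u\|_{k+1,T}^2$; summing over $T\in\T_h$ gives the claim and hence $\|e_h\|_{2,h}\lesssim h^{k-1}\|u\|_{k+1}$.

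For the dual variable I would invoke Lemma~\ref{leminf} with $\lambda=\lambda_h=\epsilon_h$ to select $v^*\in V_h^0$ satisfying $(\Delta_{w,h}v^*,\epsilon_h)=\|\epsilon_h\|_{0,h}^2$ and $\|v^*\|_{2,h}\lesssim\|\epsilon_h\|_{0,h}$. Testing (\ref{sehv}) with $v=v^*$ gives $\|\epsilon_h\|_{0,h}^2=(\Delta_{w,h}v^*,\epsilon_h)=-s(e_h,v^*)-s(Q_hu,v^*)$. Bounding the right-hand side by the Cauchy--Schwarz inequality for $s(\cdot,\cdot)$, using $s(v^*,v^*)^{1/2}\le\|v^*\|_{2,h}$ from the norm definition (\ref{norm}), together with the already-established bounds $\|e_h\|_{2,h}\lesssim h^{k-1}\|u\|_{k+1}$ and $s(Q_hu,Q_hu)^{1/2}\lesssim h^{k-1}\|u\|_{k+1}$, yields $\|\epsilon_h\|_{0,h}^2\lesssim h^{k-1}\|u\|_{k+1}\,\|v^*\|_{2,h}\lesssim h^{k-1}\|u\|_{k+1}\,\|\epsilon_h\|_{0,h}$. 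Cancelling one factor of $\|\epsilon_h\|_{0,h}$ gives $\|\lambda_h\|_{0,h}\lesssim h^{k-1}\|u\|_{k+1}$, and adding the two estimates completes the proof.
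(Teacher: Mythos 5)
Your proposal is correct and follows essentially the same route as the paper's proof: the observation that (\ref{sehv2}) forces $e_h\in Z_h$, the coercivity of Lemma~\ref{lemcoe} combined with the stabilizer consistency estimate to bound $\|e_h\|_{2,h}$, and the inf-sup construction of Lemma~\ref{leminf} tested in (\ref{sehv}) to bound $\|\lambda_h\|_{0,h}$. The only cosmetic difference is that you bound $s(Q_hu,Q_hu)$ directly via the identities $Q_0u-Q_bu=Q_b(Q_0u-u)$ and $\nabla Q_0u\cdot\bn-Q_n(\nabla u\cdot\bn)=Q_n(\nabla(Q_0u-u)\cdot\bn)$ and then apply Cauchy--Schwarz, whereas the paper uses the same edge-projection orthogonality inside the pairing to bound $|s(Q_hu,v)|$ for arbitrary $v$; both rest on the trace inequality (\ref{tracein}) and the approximation estimate (\ref{error1}) and give the identical bound $h^{k-1}\|u\|_{k+1}\,s(v,v)^{1/2}$.
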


\begin{proof} We use the error equations (\ref{sehv})-(\ref{sehv2})
to derive the error estimate (\ref{erres}). To this end, note that
the right-hand side in the equation (\ref{sehv}) is given by
\begin{equation}\label{sterm}
\begin{split}
s(Q_hu,v)=&\sum_{T\in {\cal T}_h}h_T^{-3} \langle
 Q_0u-Q_bu, v_0-v_b\rangle _{\partial T} \\
 &+ h_T^{-1} \langle  \nabla Q_0u \cdot \bn-Q_n(\nabla u \cdot \bn), \nabla Q_0 v\cdot\bn -Q_n (\nabla v \cdot \bn)\rangle _{\partial T}.
\end{split}
\end{equation}
From the Cauchy-Schwarz inequality, the trace
inequality (\ref{tracein}) and the estimate (\ref{error1}) we have
\begin{equation}\label{sterm1}
\begin{split}
& \Big| \sum_{T\in {\cal T}_h}h_T^{-3} \langle
 Q_0u-Q_bu, v_0-v_b\rangle _{\partial T} \Big|\\
 \leq & \Big| \sum_{T\in {\cal T}_h}h_T^{-3} \langle
 Q_0u- u, v_0-v_b\rangle _{\partial T} \Big|\\
 \leq & \Big(  \sum_{T\in {\cal T}_h}h_T^{-3}\|
 Q_0u- u\|^2_{\partial T}\Big)^{\frac{1}{2}}  \Big(  \sum_{T\in {\cal T}_h}h_T^{-3}\|  v_0-v_b\|^2_{\partial T}\Big)^{\frac{1}{2}}  \\
\lesssim & \Big(  \sum_{T\in {\cal T}_h}h_T^{-4}\|
 Q_0u- u\|^2_{T}+h_T^{-2}\|
 Q_0u- u\|^2_{1, T}\Big)^{\frac{1}{2}} s(v,v)^{\frac12} \\
 \lesssim & h^{k-1} \|u\|_{k+1} s(v,v)^{\frac12}.
 \end{split}
\end{equation}
Analogously, the second term on the right-hand side of (\ref{sterm})
can be bounded as follows:
\begin{equation}\label{sterm2}
\begin{split}
&\Big| \sum_{T\in {\cal T}_h}h_T^{-1} \langle  \nabla Q_0u \cdot
\bn-Q_n(\nabla u \cdot \bn), \nabla Q_0 v\cdot\bn -Q_n (\nabla v
\cdot \bn)\rangle _{\partial T}\Big|\\
&\lesssim  h^{k-1} \|u\|_{k+1} s(v,v)^{\frac12}.
\end{split}
\end{equation}
Substituting (\ref{sterm1}) and (\ref{sterm2}) into  (\ref{sterm})
gives
\begin{equation}\label{EQ:Estimate_4_RHT}
\Big| s(Q_hu,v)\Big|  \lesssim h^{k-1} \|u\|_{k+1} s(v,v)^{\frac12}.
\end{equation}

Going back to the error estimate, we observe that the error function
$e_h=u_h-Q_h u$ belongs to $V_h^0$. Thus, we may take $v=e_h$ in the
error equation (\ref{sehv}) to obtain
$$
s(e_h, e_h) + (\Delta_{w,h} e_h, \lambda_h) = - s(Q_h u, e_h).
$$
Note that the second equation (\ref{sehv2}) implies $\Delta_{w,h} e_h
\equiv 0$; i.e., $e_h\in Z_h$. Thus,
$$
s(e_h, e_h) = - s(Q_h u, e_h).
$$
Now from the estimate (\ref{EQ:Estimate_4_RHT}) we obtain
$$
s(e_h, e_h) = |s(Q_h u, e_h)| \lesssim h^{k-1} \|u\|_{k+1}
 s(e_h,e_h)^{\frac12},
$$
which leads to
\begin{equation}\label{EQ:error-estimate-1}
s(e_h,e_h)^{\frac12} \lesssim h^{k-1} \|u\|_{k+1}.
\end{equation}
Using the coercivity estimate (\ref{coer}) we arrive at
\begin{equation}\label{EQ:error-estimate-2}
\|e_h\|_{2,h}^2 \lesssim s(e_h,e_h) \lesssim h^{2k-2} \|u\|_{k+1}^2.
\end{equation}

It remains to estimate the Lagrange multiplier or the dual variable $\lambda_h$.
From the first error equation (\ref{sehv}) we have
\begin{equation}\label{EQ:November:26:900}
(\Delta_{w,h} v, \lambda_h) = - s(e_h, v) - s (Q_h u, v),\qquad \forall
v\in V_h^0.
\end{equation}
According to Lemma \ref{leminf}, for the given $\lambda_h$, there
exists a finite element function $v^*\in V_h^0$ satisfying
(\ref{inf1})-(\ref{inf2}). Combining this with the equation
(\ref{EQ:November:26:900}) yields
$$
\|\lambda_h\|_{0,h}^2 = (\Delta_{w,h} v^*, \lambda_h) = - s(e_h, v^*) -
s(Q_h u, v^*).
$$
It follows that
\begin{equation}\label{EQ:November:26:901}
\begin{split}
\|\lambda_h\|_{0,h}^2 & \leq |s(e_h, v^*)| + |s(Q_h u, v^*)| \\
& \leq s(e_h,e_h)^{1/2} s(v^*, v^*)^{1/2} + |s(Q_h u, v^*)| \qquad
\mbox{by Cauchy-Schwarz}\\
& \lesssim h^{k-1} \|u\|_{k+1} s(v^*, v^*)^{1/2}\qquad\qquad
\mbox{by (\ref{EQ:error-estimate-1})
and (\ref{EQ:Estimate_4_RHT})} \\
& \lesssim h^{k-1} \|u\|_{k+1} \|\lambda_h\|_{0,h}\qquad\qquad
\mbox{by (\ref{inf2})}.
\end{split}
\end{equation}
Hence, we obtain
$$
\|\lambda_h\|_{0,h} \lesssim h^{k-1} \|u\|_{k+1},
$$
which, together with the error estimate (\ref{EQ:error-estimate-2}),
yields the desired optimal order error estimate (\ref{erres}). This
completes the proof of the theorem.
\end{proof}

\section{Error Estimate in a Weak $L^2$ Topology}\label{Section:L2Error}

To establish an error estimate for
(\ref{PD-WG32-1})-(\ref{PD-WG32-2}) in $L^2$-related topology, we consider the dual problem of seeking $\phi$ satisfying
\begin{align}\label{dual1}
\Delta \phi=& \ \eta,\qquad \text{in}\quad  \Omega,\\
\phi=& \ 0,\qquad \text{on}\quad \Gamma_n^c, \label{dual2}\\
\nabla\phi\cdot \bn=& \ 0,\qquad \text{on}\quad
\Gamma_d^c,\label{dual3}
\end{align}
where $\eta\in L^2(\Omega)$. Denote by $X_\gamma$ the set of all functions $\eta\in L^2(\Omega)$ so that the problem (\ref{dual1})-(\ref{dual3}) has a solution and, furthermore, the solution has the $H^{1+\gamma}$-regularity
\begin{equation}\label{regul}
\|\phi\|_{1+\gamma}\leq C\|\eta\|_0
\end{equation}
with $1/2<\gamma\le 1$.

\begin{lemma}\label{Lemma:TechnicalEquality}
Let $\eta\in X_\gamma$. For any $v=\{v_0, v_b, v_n\}\in V_{h}^0$, we have the following identity
\begin{equation}\label{2.14:800}
\begin{split}
(\eta, v_0) = &\sum_{T\in {\cal T}_h} (\Delta_{w,h} v, \phi)_T-
 \langle v_0- v_b,\nabla  {\cal Q}_h\phi \cdot \bn  - \nabla  \phi \cdot \bn  \rangle_{\partial T}\\
&+ \langle  \nabla v_0\cdot \bn - v_n, {\cal Q}_h \phi -\phi
\rangle_{\partial T}.
\end{split}
\end{equation}
\end{lemma}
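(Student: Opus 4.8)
The plan is to prove \eqref{2.14:800} by transforming its right-hand side into $(\eta, v_0)$, using the definition of the discrete weak Laplacian together with Green's second identity, and then checking that a collection of interelement and boundary terms cancels. The hypothesis $\eta\in X_\gamma$ with $\gamma>1/2$ will be used precisely to make the boundary pairings against $\nabla\phi$ meaningful.

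First I would exploit that $\Delta_{w,h}v|_T\in P_{k-2}(T)$ and that ${\cal Q}_h$ is the $L^2$ projection onto $W_h$, so that $(\Delta_{w,h}v,\phi)_T=(\Delta_{w,h}v,{\cal Q}_h\phi)_T$. Since $v_0\in P_k(T)\subset H^2(T)$, I would then apply the integration-by-parts form \eqref{2.4new} of the discrete weak Laplacian with the polynomial test function $\varphi={\cal Q}_h\phi$, writing $(\Delta_{w,h}v,{\cal Q}_h\phi)_T$ in terms of $(\Delta v_0,{\cal Q}_h\phi)_T$ and boundary pairings against ${\cal Q}_h\phi$ and $\nabla{\cal Q}_h\phi\cdot\bn$. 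Substituting this into the right-hand side of \eqref{2.14:800}, the ${\cal Q}_h\phi$- and $\nabla{\cal Q}_h\phi\cdot\bn$-contributions cancel against the two correction terms, leaving only boundary pairings against $\phi$ and $\nabla\phi\cdot\bn$; moreover $(\Delta v_0,{\cal Q}_h\phi)_T=(\Delta v_0,\phi)_T$ because $\Delta v_0\in P_{k-2}(T)$. This reduces the right-hand side to the intermediate expression $\sum_{T\in\T_h}\big[(\phi,\Delta v_0)_T+\langle v_0-v_b,\nabla\phi\cdot\bn\rangle_\pT-\langle \nabla v_0\cdot\bn-v_n,\phi\rangle_\pT\big]$.

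Next I would bring in the dual equation $\Delta\phi=\eta$ through Green's second identity on each $T$, namely $(\Delta\phi,v_0)_T=(\phi,\Delta v_0)_T+\langle v_0,\nabla\phi\cdot\bn\rangle_\pT-\langle \phi,\nabla v_0\cdot\bn\rangle_\pT$. Writing $v_0=(v_0-v_b)+v_b$ and $\nabla v_0\cdot\bn=(\nabla v_0\cdot\bn-v_n)+v_n$ in the boundary terms, I recognize that the intermediate expression above equals $(\eta,v_0)-\sum_{T\in\T_h}\big[\langle v_b,\nabla\phi\cdot\bn\rangle_\pT-\langle v_n,\phi\rangle_\pT\big]$. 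Thus the claim reduces to showing that $\sum_{T\in\T_h}\big[\langle v_b,\nabla\phi\cdot\bn\rangle_\pT-\langle v_n,\phi\rangle_\pT\big]=0$.

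The main obstacle is exactly this cancellation, and it is where $\eta\in X_\gamma$ with $\gamma>1/2$ enters: it guarantees $\phi\in H^{1+\gamma}(\Omega)$, so that both $\phi$ and $\nabla\phi$ have single-valued $L^2$ traces on every $e\in\E_h$, making each pairing well defined. On an interior edge/face shared by $T_1,T_2$, the single-valuedness of $v_b$ and of $v_n\bn$ (hence $v_n|_{T_1}+v_n|_{T_2}=0$), together with the continuity of $\phi$ and $\nabla\phi$ and the opposite outward normals, forces the two contributions to cancel. On a boundary edge/face I would split $\partial\Omega$ according to the portions $\Gamma_d$ and $\Gamma_n$: the term $\langle v_b,\nabla\phi\cdot\bn\rangle_e$ vanishes because $v_b=0$ on $\Gamma_d$ while $\nabla\phi\cdot\bn=0$ on $\Gamma_d^c$ by \eqref{dual3}, and $\Gamma_d\cup\Gamma_d^c=\partial\Omega$; symmetrically $\langle v_n,\phi\rangle_e$ vanishes because $v_n=0$ on $\Gamma_n$ and $\phi=0$ on $\Gamma_n^c$ by \eqref{dual2}, with $\Gamma_n\cup\Gamma_n^c=\partial\Omega$. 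Collecting these facts shows the residual boundary sum is zero, so the right-hand side of \eqref{2.14:800} equals $(\eta,v_0)$, which completes the proof.
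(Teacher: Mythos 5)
Your proof is correct and is essentially the paper's own argument run in reverse: the paper starts from $(\eta,v_0)$, applies Green's identity and inserts the $v_b$, $v_n$ terms (justified by exactly the trace/boundary cancellation you spell out), and then invokes \eqref{2.4new} with $\varphi={\cal Q}_h\phi$ together with the projection properties, whereas you start from the right-hand side and unwind the same three identities in the opposite order. The only difference is expository: you make explicit the interior-edge cancellation (single-valuedness of $v_b$ and $v_n\bn$ against the continuous traces of $\phi$ and $\nabla\phi$) and the role of $\gamma>1/2$, which the paper leaves implicit in the phrase ``where we have used the homogeneous boundary condition.''
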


\begin{proof} By testing (\ref{dual1}) with $v_0$ on each element $T\in\T_h$,
we obtain from the usual integration by parts that
\begin{equation}\label{2.11}
\begin{split}
(\eta, v_0)= &\sum_{T\in {\cal T}_h}(\Delta \phi, v_0)_T \\
=& \sum_{T\in {\cal T}_h} (\phi, \Delta v_0)_T -\langle \phi, \nabla  v_0\cdot \bn \rangle_{\partial T}
+ \langle \nabla  \phi \cdot \bn,  v_0\rangle_{\partial T}\\
=& \sum_{T\in {\cal T}_h} (\phi, \Delta v_0)_T
-\langle \phi, \nabla v_0\cdot \bn-v_n \rangle_{\partial T}+
\langle \nabla \phi \cdot \bn, v_0 -v_b\rangle_{\partial T}\\
\end{split}
\end{equation}
where we have used the homogeneous boundary condition
(\ref{dual2})-(\ref{dual3}) and the fact that $v_b=0$ on $\Gamma_d$ and
$v_n=0$ on $\Gamma_n$ in the third line. Next, by setting
$\varphi={\cal Q}_h \phi$ in (\ref{2.4new}), we arrive at
\begin{align*}
(\Delta_{w,h} v, {\cal Q}_h \phi )_T = & (\Delta v_0,   {\cal Q}_h
\phi)_T+ \langle v_0- v_b,\nabla  {\cal Q}_h \phi\cdot \bn
\rangle_{\partial T}-\langle  \nabla v_0\cdot \bn - v_n, {\cal Q}_h \phi \rangle_{\partial T} \\
=&(\Delta v_0, \phi)_T+ \langle v_0- v_b,\nabla  {\cal Q}_h
\phi\cdot \bn \rangle_{\partial T}-
 \langle  \nabla v_0\cdot \bn - v_n, {\cal Q}_h \phi \rangle_{\partial T},
\end{align*}
which can be rewritten as
\begin{equation*}\label{2.13}
\begin{split}
(\Delta  v_0,  \phi)_T =(\Delta_{w,h} v, {\cal Q}_h \phi )_T-
 \langle v_0- v_b,\nabla  {\cal Q}_h \phi\cdot \bn \rangle_{\partial T}+
 \langle  \nabla v_0\cdot \bn - v_n, {\cal Q}_h \phi \rangle_{\partial
 T}.
\end{split}
\end{equation*}
Substituting the above identity into (\ref{2.11}) yields
\begin{equation}\label{2.14}
\begin{split}
 (\eta, v_0)=&\sum_{T\in {\cal T}_h} (\Delta_{w,h} v, {\cal Q}_h \phi)_T-
 \langle v_0- v_b,\nabla  {\cal Q}_h \phi\cdot \bn \rangle_{\partial T}\\
 &+\langle  \nabla v_0\cdot \bn - v_n, {\cal Q}_h \phi \rangle_{\partial T}-\langle \phi,
 \nabla v_0\cdot \bn-v_n \rangle_{\partial T}\\
 &+ \langle \nabla \phi \cdot \bn,   v_0 -v_b\rangle_{\partial T}\\
=&\sum_{T\in {\cal T}_h} (\Delta_{w,h} v, \phi )_T-
 \langle v_0- v_b,\nabla  {\cal Q}_h \phi \cdot \bn  - \nabla \phi \cdot \bn  \rangle_{\partial T}\\&+
 \langle  \nabla v_0\cdot \bn - v_n, {\cal Q}_h \phi -\phi  \rangle_{\partial T},\\
\end{split}
\end{equation}
which completes the proof of the lemma.
\end{proof}

We are now in a position to present an error estimate for the
component $u_0$ of the weak finite element solution $u_h$ in the weak
topology induced by the space $X_\gamma$. The result can be stated as follows.

\begin{theorem}\label{Thm:L2errorestimate}
Let $k\geq 2$ be the order of the finite element space in the numerical
scheme (\ref{PD-WG32-1})-(\ref{PD-WG32-2}) and set $\tau_0=\min\{2,k-1\}$.
Denote by $u_h\in V_h^g$ the numerical solution of (\ref{EQ:model-problem}) arising from the primal-dual WG algorithm (\ref{PD-WG32-1})-(\ref{PD-WG32-2}), with $\lambda_h\in W_h$ being the numerical Lagrange multiplier. Assume that the exact solution
$u$ exists and is sufficiently regular such that $u\in
H^{k+1}(\Omega)$. Under the $H^{1+\gamma}$-regularity assumption (\ref{regul}), the following error estimate holds true
\begin{equation}\label{e0}
\sup_{\eta\in X_\gamma}\frac{|(Q_0u - u_0, \eta)|}{\|\eta\|}  \lesssim h^{k+\tau_0+\gamma-2} \|u\|_{k+1}.
\end{equation}
\end{theorem}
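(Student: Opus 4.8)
The plan is to estimate $(Q_0 u - u_0, \eta)$ via a duality argument, using the technical identity from Lemma~\ref{Lemma:TechnicalEquality} applied to the error function $e_h = u_h - Q_h u \in V_h^0$. First I would observe that the quantity I want to bound, $(Q_0 u - u_0, \eta)$, is exactly $-(\eta, (e_h)_0)$ up to sign, since $(e_h)_0 = u_0 - Q_0 u$. So it suffices to apply the identity \eqref{2.14:800} with $v = e_h$:
\[
(Q_0 u - u_0, \eta) = -\sum_{T\in\T_h}(\Delta_{w,h} e_h, \phi)_T + \langle (e_h)_0 - (e_h)_b, \nabla \Q_h\phi\cdot\bn - \nabla\phi\cdot\bn\rangle_{\pT} - \langle \nabla(e_h)_0\cdot\bn - (e_h)_n, \Q_h\phi - \phi\rangle_{\pT}.
\]
The crucial simplification is that $\Delta_{w,h} e_h \equiv 0$ by the second error equation \eqref{sehv2}, so the volume term vanishes entirely. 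This leaves only the two boundary terms, which measure the mismatch between $\phi$ and its projection $\Q_h\phi$ paired against the jump/stabilizer quantities of $e_h$.

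Next I would bound each boundary term by Cauchy--Schwarz, splitting the pairing into a factor carrying the stabilizer weights of $e_h$ and a factor carrying the projection error of $\phi$. For the first boundary term, I expect to write it as
\[
\Big(\sum_T h_T^{-3}\|(e_h)_0 - (e_h)_b\|_{\pT}^2\Big)^{1/2}\Big(\sum_T h_T^{3}\|\nabla\Q_h\phi\cdot\bn - \nabla\phi\cdot\bn\|_{\pT}^2\Big)^{1/2},
\]
so that the first factor is controlled by $s(e_h,e_h)^{1/2} \le \|e_h\|_{2,h} \lesssim h^{k-1}\|u\|_{k+1}$ from Theorem~\ref{theoestimate}; similarly the second boundary term contributes a factor $h_T^{-1/2}\|\cdot\|$ matching the second stabilizer weight. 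The projection-error factors, after applying the trace inequality \eqref{tracein} and the approximation estimate \eqref{error3} for $\Q_h$ (together with the companion estimate for $Q_0$), should each contribute roughly $h^{\tau_0 + \gamma}\|\phi\|_{1+\gamma}$, where the exponent $\tau_0 = \min\{2, k-1\}$ tracks the polynomial degree $k-2$ of $W_h$ limiting the approximation order, and the extra $\gamma$ comes from the fractional $H^{1+\gamma}$-regularity of $\phi$. Invoking the regularity bound \eqref{regul} converts $\|\phi\|_{1+\gamma}$ into $C\|\eta\|$, and combining all factors yields $h^{(k-1)+(\tau_0+\gamma)-1}\|u\|_{k+1}\|\eta\| = h^{k+\tau_0+\gamma-2}\|u\|_{k+1}\|\eta\|$. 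Dividing by $\|\eta\|$ and taking the supremum over $\eta\in X_\gamma$ then gives \eqref{e0}.

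The main obstacle I anticipate is the careful bookkeeping of the fractional-order approximation estimates for the projection error $\Q_h\phi - \phi$ and $\nabla(\Q_h\phi - \phi)\cdot\bn$ on $\pT$ when $\phi$ only has $H^{1+\gamma}$ regularity with $1/2 < \gamma \le 1$. The condition $\gamma > 1/2$ is exactly what guarantees that the normal trace $\nabla\phi\cdot\bn$ is well-defined in $L^2(\pT)$ and that the trace inequality \eqref{tracein} can be applied to the gradient-mismatch factor; getting the powers of $h_T$ to match between the stabilizer weights $h_T^{-3}, h_T^{-1}$ and the complementary weights $h_T^{3}, h_T^{1}$ so that the approximation estimates \eqref{error1}--\eqref{error3} apply cleanly in a fractional-regularity setting is where I expect the delicate work to lie. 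I would also need to verify that $\Q_h$ here denotes the projection onto $W_h$ (degree $k-2$), which is what caps the approximation order at $\tau_0 = \min\{2, k-1\}$ rather than the full $k-1$.
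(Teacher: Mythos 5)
Your proposal is correct and follows essentially the same route as the paper's proof: apply Lemma \ref{Lemma:TechnicalEquality} with $v=e_h$, kill the volume term via the error equation (\ref{sehv2}), split the two boundary terms by Cauchy--Schwarz with the stabilizer weights $h_T^{-3}, h_T^{-1}$ against $h_T^{3}, h_T$, bound the stabilizer factors by Theorem \ref{theoestimate} and the projection factors by the trace inequality (\ref{tracein}), the approximation estimate (\ref{error3}), and the regularity assumption (\ref{regul}). The only discrepancy is cosmetic bookkeeping: each projection-error factor actually contributes $h^{\tau_0+\gamma-1}\|\phi\|_{1+\gamma}$ (not $h^{\tau_0+\gamma}$), which combined with $h^{k-1}\|u\|_{k+1}$ gives exactly the exponent $k+\tau_0+\gamma-2$ you arrive at.
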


\begin{proof} By letting $v=e_h$ in Lemma \ref{Lemma:TechnicalEquality}, we
have from (\ref{2.14:800})
\begin{equation}\label{2.14:800:10:L2}
\begin{split}
(\eta, e_0) =& \sum_{T\in {\cal T}_h} (\Delta_{w,h} e_h, \phi)_T -
 \langle e_0- e_b,\nabla  {\cal Q}_h \phi \cdot \bn  - \nabla \phi \cdot \bn  \rangle_{\partial T} \\
 &+ \langle  \nabla e_0\cdot \bn - e_n, {\cal Q}_h \phi - \phi  \rangle_{\partial T}.\\
\end{split}
\end{equation}
The right-hand side of (\ref{2.14:800:10:L2}) can be estimated as
follows. First, we use the error equation (\ref{sehv2}) to obtain
\begin{equation}\label{2.14.120:L2}
\begin{split}
    \sum_{T\in{\cal T}_h}
(\Delta_{w,h} e_h, \phi)_T =\sum_{T\in{\cal T}_h} (\Delta_{w,h} e_h,  {\cal
Q}_h \phi )_T = 0.
\end{split}
\end{equation}
Secondly, we use the Cauchy-Schwarz inequality, the trace inequality
(\ref{tracein}), the interpolation error estimate (\ref{error3}),
the regularity assumption (\ref{regul}) and the error estimate
(\ref{erres}) to obtain
\begin{equation}\label{EQ:New:2015:800:L2}
\begin{split}
& \left| \sum_{T\in{\cal T}_h} \langle e_0- e_b,\nabla  {\cal Q}_h \phi \cdot \bn  - \nabla \phi \cdot \bn
\rangle_{\partial T}\right| \\
\leq & \Big(\sum_{T\in{\cal T}_h} h_T^{-3} \|e_0- e_b\|^2_{\partial T} \Big)^{\frac{1}{2}}
\Big(\sum_{T\in{\cal T}_h} h_T^{3} \|\nabla  {\cal Q}_h \phi \cdot \bn  - \nabla  \phi \cdot \bn\|^2_{\partial T} \Big)^{\frac{1}{2}} \\
%\lesssim & \| e_h\|_{2,h} \Big(\sum_{T\in{\cal T}_h} h_T^{2} \|\nabla  {\cal Q}_h \phi \cdot \bn  - \nabla \phi
%\cdot \bn\|^2_{T} +h_T^{4} \|\nabla  {\cal Q}_h \phi \cdot \bn  - \nabla \phi \cdot \bn\|^2_{1, T}  \Big)^{\frac{1}{2}}\\
\lesssim &  h^{k-1} \|u\|_{k+1} h^{\tau_0-1+\gamma}\|\phi\|_{1+\gamma}\\
\lesssim & h^{k+\tau_0+\gamma-2} \|u\|_{k+1}\|\eta\|_0.
\end{split}
\end{equation}
As to the third term, we once again use the Cauchy-Schwarz
inequality, trace inequality (\ref{tracein}), the interpolation
error estimate (\ref{error3}), the regularity assumption
(\ref{regul}) and the error estimate (\ref{erres}) to obtain
\begin{equation}\label{EQ:New:2015:810:L2}
\begin{split}
&\ \left| \sum_{T\in{\cal T}_h} \langle  \nabla e_0\cdot \bn - e_n, {\cal Q}_h \phi -\phi  \rangle_{\partial T}\right|\\
\leq &\ \Big(\sum_{T\in{\cal T}_h} h_T^{-1} \|\nabla e_0\cdot \bn - e_n\|^2_{\partial T} \Big)^{\frac{1}{2}}
\Big(\sum_{T\in{\cal T}_h} h_T  \|{\cal Q}_h \phi -\phi \|^2_{\partial T} \Big)^{\frac{1}{2}} \\
\lesssim & \ \|e_h\|_{2,h} \Big(\sum_{T\in{\cal T}_h} \|{\cal Q}_h \phi -\phi \|^2_{T} +h_T^2  \|{\cal Q}_h \phi -\phi \|^2_{1, T} \Big)^{\frac{1}{2}} \\
\lesssim & \ h^{k-1} \|u\|_{k+1} h^{\tau_0+\gamma-1}\|\phi\|_{1+\gamma}\\
\lesssim & \ h^{k+\tau_0+\gamma-2} \|u\|_{k+1}\|\eta\|_0.
\end{split}
\end{equation}
Finally, by inserting the estimates (\ref{2.14.120:L2})
-(\ref{EQ:New:2015:810:L2}) into (\ref{2.14:800:10:L2}) we arrive at
$$
|(e_0, \eta)| \lesssim h^{k+\tau_0+\gamma-2} \|u\|_{k+1}\|\eta\|_0.
$$
This completes the proof of the theorem.
\end{proof}

\section{Numerical Experiments}\label{Section:NE}
In this section we shall present some
numerical results for the numerical approximations of the elliptic Cauchy problem (\ref{EQ:model-problem}) arising from the primal-dual
weak Galerkin scheme (\ref{PD-WG32-1})-(\ref{PD-WG32-2})
corresponding to the lowest order; i.e., $k=2$. For simplicity, the
domain is chosen as an unit square $\Omega=(0,1)^2$, and uniform
triangulations of $\Omega$ are employed in the numerical implementation. The finite element functions are of $C^0$-type so that $v_0=v_b$ on the boundary of each element; note that the convergence theory presented in previous sections is applicable to such elements.

The local finite element space for the primal variable is thus given by
$$
V(2,T)=\{v=\{v_0,v_n\bn\}:\ v_0\in P_2(T), v_n|_e\in P_1(e), \forall \;
  \mbox{edge}\ e\subset \pT\},
$$
and the finite element space for the Lagrange multiplier (also known as the dual variable) $\lambda$
consists of piecewise constants. For any given $v=\{v_0,v_n\bn\}\in
V(2,T)$, the action of the discrete weak Laplacian on $v$ (i.e.,
$\Delta_{w,h} v$) is computed as a constant on $T$ by using the
following equation
\begin{equation*}
(\Delta _{w,h} v, \varphi)_T= - (\nabla v_0,\nabla \varphi)_T +
 \langle v_n,\varphi \rangle_{\partial T},\qquad \forall \ \varphi \in P_0(T).
 \end{equation*}
As the test function is constant-valued on the element $T$, the above equation can be simplified as
  \begin{equation*}
  \begin{split}
 (\Delta _{w,h} v,\varphi)_T= \langle v_n,\varphi \rangle_{\pT},\qquad \forall \ \varphi \in P_0(T).
\end{split}
\end{equation*}

The error for the solution of the primal-dual weak Galerkin
algorithm (\ref{PD-WG32-1})-(\ref{PD-WG32-2}) is computed in several norms detailed as follows:

\begin{alignat*}{2}
\|v\|  & \ := \|v_0\|_{L^2(\Omega)},\qquad &&\mbox{($L^2$-norm)},\\
\|v\|_{L^1} & \ := \|v_0\|_{L^1(\Omega)},\qquad && \mbox{($L^1$-norm)},\\
\|v\|_{L^\infty} & \ := \|v_0\|_{L^\infty(\Omega)},\qquad && \mbox{($L^\infty$-norm)},\\
\|v\|_{1,h} & \ := \left( \sum_{T\in\T_h} h_T \|v_n\|_{0,\pT}^2
\right)^{\frac12},\qquad  &&  \mbox{(discrete $H^1$-norm)},\\
\|v\|_{W^{1,1}} & \ := \sum_{T\in\T_h} h_T \|v_n\|_{L^1(\pT)},\qquad  && \mbox{(discrete $W^{1,1}$-norm)},\\
\| v \|_{2,h} & \ := \left(\|\Delta v_0\|^2 +s(v,v)\right)^{\frac12},
\qquad && \mbox{(discrete $H^2$-norm)}.
\end{alignat*}

In our numerical experiments, the load function $f=f(x,y)$ and the Cauchy boundary data in the model problem (\ref{EQ:model-problem}) are computed according to the given exact solution $u=u(x,y)$. The uniform triangular partitions are obtained by first partitioning the domain $\Omega$ into $n\times n$ uniform sub-squares and then dividing each square element into two triangles by the diagonal line with negative slope.

%The following numerical data should be included:
%\begin{itemize}
%\item Residual error and the order of convergence (it is already
%included in the experiments)
%\item $L^2$ error and order of convergence (already included, but
%please rerun the new code as the computational formula has been
%slightly changed)
%\item $L^1$ error. This needs to be added in.
%\item $W^{1,1}$ error based on the approximation $u_g$.
%\item $H^1$ error based on the approximation $u_g$.
%\item numerical tests with different configurations of the Cauchy
%boundary condition: one line segment, 2 line segments, and 3 or 4
%%line segments.
%\item the updated version of the code allows users to set different
%type of boundary conditions, which can be summarized as follows:
%\begin{itemize}
%\item iedge(*,3) = 1 means it is a horizontal edge on which both
%Dirichlet and Naumann boundary conditions are set.
%\item iedge(*,3) = 11 means it is a horizontal edge on which only
%Dirichlet boundary condition is set.
%\item iedge(*,3) = 12 means it is a horizontal edge on which only
%Naumann boundary condition is set.
%\item iedge(*,3) = 3 means it is a vertical edge on which both
%Dirichlet and Naumann boundary conditions are set.
%\item iedge(*,3) = 31 means it is a vertical edge on which only
%Dirichlet boundary condition is set.
%\item iedge(*,3) = 32 means it is a vertical edge on which only
%Naumann boundary condition is set.
%\item iedge(*,3) = 2 means it is a boundary edge on which no
%boundary conditions are given.
%\end{itemize}

%\end{itemize}

Tables \ref{NE:TRI:Case1-0}-\ref{NE:TRI:Case1-3-2} illustrate the
performance of the numerical scheme when the boundary conditions are set as follows: (1) both Dirichlet and Neumann boundary conditions on the boundary segments $(0,1)\times 0$ and $1\times (0,1)$, (2) Dirichlet boundary condition on the boundary segment $0\times (0,1)$, and (3) Neumann boundary condition on $(0,1)\times 1$. The error between the numerical solution $u_h$ and the $L^2$ projection of the exact solution $Q_hu$ is denoted by $e=u_h-Q_hu$ (i.e., error function). The error function was measured in several norms, including the $L^1$, $L^\infty$, and $W^{1,1}$ for which no theory was developed in the previous section.

Table \ref{NE:TRI:Case1-0} demonstrates the correctness and reliability of the code with data from the exact solution $u=x^2+y^2-10xy$. It should be pointed out that the primal-dual weak Galerkin algorithm (\ref{PD-WG32-1})-(\ref{PD-WG32-2}) is exact when the exact solution is given as a quadratic polynomial. It can be seen from the table that the error is indeed in machine accuracy, particularly for relatively coarse grids. The computational results are thus in good consistency with the theory. This table gives us a great confidence on the correctness of the code implementation for the algorithm (\ref{PD-WG32-1})-(\ref{PD-WG32-2}). But it should be noted that the error seems to deteriorate when the mesh gets finer and finer. We conjecture that this deterioration might be caused by two factors: (1) the ill-posedness of the elliptic Cauchy problem, and (2) the poor conditioning of the discrete linear system.

Tables \ref{NE:TRI:Case1-1-1} - \ref{NE:TRI:Case1-3-2} show the numerical results when the exact solutions are given by $u=\sin(x)\sin(y)$,
$u=\cos(x)\cos(y)$, and $u=30xy(1-x)(1-y)$, respectively. All these
numerical results show that the convergence rate for the solution
of the primal-dual weak Galerkin algorithm (\ref{PD-WG32-1})-(\ref{PD-WG32-2}) in the discrete $H^2$
norm is of order $O(h)$, which is in great consistency with the
theory established in the previous sections. For the approximation
of $u_0$, the convergence rates in the usual $L^2$ norm, $L^1$ norm,
and $L^{\infty}$ norm seem to arrive at the order of $O(h^2)$. For the approximation of $u_n$ (i.e., the flux on element boundaries), the numerical rate of convergence is also at $O(h^2)$.
\begin{table}[H]
\begin{center}
 \caption{Numerical error and order of convergence for the exact solution $u=x^2+y^2-10xy$.}\label{NE:TRI:Case1-0}
 \begin{tabular}{|c|c|c|c|c|c|c|}
\hline $1/h$        &   $\| e\|_{2, h} $       & $\|e
\|_{L^1}$   & $\|e\|$ &$\|e\|_{1,h} $ & $\|e\|_{L^\infty}$
& $\|e\|_{W^{1,1}}$
\\
\hline 1&2.986E-14 & 5.877E-15&3.143E-15&1.948E-14 &
8.882E-15&2.917E-14
\\
\hline 2&6.168E-13&1.059E-14 &8.369E-15 &7.411E-14   &
3.741E-14&7.816E-14
\\
\hline 4 & 2.565E-12   &1.986E-14 &1.492E-14 &3.670E-13    &
7.550E-14 &4.051E-13
\\
\hline 8   &1.749E-11 &3.312E-14   & 2.438E-14 & 1.382E-12 &
1.849E-13     &1.307E-12
\\
\hline 16&  3.273E-10    &1.840E-13 & 1.578E-13 & 1.225E-11 &
1.651E-12 &   7.877E-12
\\
\hline 32  &4.799E-09    &8.080E-13 &   5.453E-13   & 8.206E-11
&6.134E-12  &   3.967E-11
\\
\hline
\end{tabular}
\end{center}
\end{table}

\begin{table}[H]
\begin{center}
\caption{Numerical error and order of convergence for the exact
solution $u=\sin(x)\sin(y)$.}\label{NE:TRI:Case1-1-1}
 \begin{tabular}{|c|c|c|c|c|c|c|}
\hline $1/h$ &  $\| e\|_{2, h} $ & order&
$\|e\|_{L^1}$   & order  & $\|e\|$  & order
\\
\hline 1   &0.1526 &&0.003771 &&   0.003107&
\\
\hline 2   &0.09246& 0.7227  &0.001798 &   1.068&  0.001331& 1.223
\\
\hline 4   &0.04283 &1.110 &0.0005520 &1.704 &0.0003955&   1.751
\\
\hline
8 & 0.01928 &1.152& 0.0001417 & 1.962&  0.0001011   &1.968  \\
\hline 16 &0.009002&   1.099& 3.521E-05&   2.009 &2.509E-05&   2.011
\\
\hline 32 &0.004343    &1.052 &8.731E-06   &2.012 & 6.226E-06 &
2.010
\\
\hline
\end{tabular}\end{center}
\end{table}

\begin{table}[H]
\begin{center}
\caption{Numerical error and order of convergence for the exact
solution $u=\sin(x)\sin(y)$.}\label{NE:TRI:Case1-1-2}
 \begin{tabular}{|c|c|c|c|c|c|c|}
\hline $1/h$ &  $\|e\|_{1,h}$ & order&   $\|e\|_{L^\infty}$ & order  &
$\|e\|_{W^{1,1}}$  & order
\\
\hline 1 & 0.07546 && 0.008561  && 0.1026  &
\\
\hline 2&  0.02313& 1.706 & 0.004312 & 0.9894&  0.02951 & 1.798
\\
\hline 4&  0.005580 & 2.051 & 0.001672 & 1.367&0.006976 &  2.081
\\
\hline 8&  0.001306& 2.096& 0.0004418&1.920 &0.001616&2.110
\\
\hline 16 &     0.0003010 & 2.075&  0.0001107&1.997&
0.00038177&2.081
\\
\hline 32&  7.512E-05 &2.045 &2.727E-05    &2.021 &9.264E-05&2.043
\\
\hline
\end{tabular}\end{center}
\end{table}

\begin{table}[H]
\begin{center}
\caption{Numerical error and order of convergence for the exact
solution $u=\cos(x)\cos(y)$.}\label{NE:TRI:Case1-2-1}
 \begin{tabular}{|c|c|c|c|c|c|c|}
\hline $1/h$ &  $\|e\|_{2, h} $ & order&
$\|e\|_{L^1}$   & order  & $\|e\|$  & order
\\
\hline 1& 0.1105&& 0.005454&&  0.005977&
\\
\hline 2 &0.07575 &0.5443&0.002730&0.9985& 0.002453&1.285
\\
\hline 4&0.03590 &1.077 &0.0005984 &2.190&0.0005052&   2.280
\\
\hline 8&0.01701 &1.078 &0.0001345 &2.153 &0.0001069  &2.240
\\
\hline 16 &0.008379 &1.022&3.164E-05& 2.088&2.460E-05&2.120
\\
\hline
32 &0.004183&1.002  &7.662E-06  &2.046 &5.912E-06& 2.057\\
\hline
\end{tabular}\end{center}
\end{table}

\begin{table}[H]
\begin{center}
\caption{Numerical error and order of convergence for the exact
solution $u=\cos(x)\cos(y)$.}\label{NE:TRI:Case1-2-2}
 \begin{tabular}{|c|c|c|c|c|c|c|}
\hline $1/h$ &  $\|e\|_{1,h}$ & order&   $\|e\|_{L^\infty}$ & order  & $\|e\|_{W^{1,1}}$  & order
\\
\hline 1&0.1373    &&0.02067   &&  0.2133&
\\
\hline
 2&0.03903 &1.814 &0.01073 &0.9461&0.04813  &2.148
\\
\hline 4&0.008740 &2.159&0.002808  &1.934 &0.01039 &2.212
\\
\hline 8&0.001950 &2.164 &0.0006421 &2.129&0.002365 &2.135
\\
\hline 16 &0.0004558 & 2.097 &0.0001550 &2.051&0.0005639 &2.068
\\
\hline 32& 0.0001103&2.047 &3.844E-05  &2.011 &    0.0001379
&2.032
\\
\hline
\end{tabular}
\end{center}
\end{table}

\begin{table}[H]
\begin{center}
\caption{Numerical error and order of convergence for the exact
solution $u=30xy(1-x)(1-y)$.}\label{NE:TRI:Case1-3-1}
 \begin{tabular}{|c|c|c|c|c|c|c|}
\hline $1/h$ &  $\|e\|_{2, h} $ & order&
$\|e\|_{L^1}$   & order  & $\|e\|$  & order
\\
\hline 1 & 21.37&& 0.4892  &&  0.4641  &
\\
\hline 2 & 9.937 &1.104 & 0.2044 &1.259 &0.1871 &1.311
\\
\hline 4 & 4.487 &1.147 &0.04177 &2.291&0.03583 & 2.385
\\
\hline 8 & 2.100 &1.095 &0.009281 &2.170 &0.007429 &2.270
\\
\hline 16 & 1.026 &1.032 &0.002187 &2.085 &0.001709  &2.120
\\
\hline 32 & 0.5105 &1.008 &0.0005300 &2.045 &0.0004113  &2.056
\\
\hline
\end{tabular}\end{center}
\end{table}

\begin{table}[H]
\begin{center}
\caption{Numerical error and order of convergence for the exact
solution $u=30xy(1-x)(1-y)$.}\label{NE:TRI:Case1-3-2}
 \begin{tabular}{|c|c|c|c|c|c|c|}
\hline $1/h$ &  $\|e\|_{1,h}$ & order&   $\|e\|_{L^\infty}$ & order  & $\|e\|_{W^{1,1}}$  & order \\
\hline 1& 16.12 && 1.557 &&18.11 &
\\
\hline 2& 4.138&1.962  & 0.8164    & 0.9318&   4.647   & 1.963
\\
\hline 4& 0.9333& 2.149& 0.2066& 1.982     & 1.055 & 2.139
\\
\hline
8& 0.2120& 2.138 & 0.04629& 2.158   & 0.2439    & 2.113\\
\hline 16& 0.05007 & 2.082& 0.01106& 2.066& 0.05865& 2.056
\\
\hline 32& 0.01217 & 2.041& 0.002753& 2.006 &  0.01439& 2.027
\\
\hline
\end{tabular}
\end{center}
\end{table}

 %1,2,11;3,2,0;3,1,31;2,4,32;4,3,12

Tables \ref{NE:TRI:Case2-1-1}-\ref{NE:TRI:Case2-3-2} demonstrate the
performance of the PD-WG algorithm when the boundary conditions are set as follows: (1) Dirichlet on the boundary segments $(0,1)\times 0$ and $0\times (0,1)$, and (2) Neumann on the boundary segments
$1\times (0,1)$ and  $(0,1)\times 1$. Note that this is a standard mixed boundary value problem, and no Cauchy data is given on the boundary. The purpose of this test is to show the efficiency of the PD-WG algorithm (\ref{PD-WG32-1})-(\ref{PD-WG32-2}) for classical well-posed problems.

%Table \ref{NE:TRI:Case2-0} demonstrates the performance of the code
%when the exact solution is given by $u=x^2+y^2-10xy$. In theory, the
%primal-dual WG finite element method is exact for any quadratic
%polynomials. The computational results are in consistency with
%theory. This table shows that the code should work well.
Tables \ref{NE:TRI:Case2-1-1}-\ref{NE:TRI:Case2-3-2} show the
numerical results when the exact solutions are given by
$u=\sin(x)\sin(y)$, $u=\cos(x)\cos(y)$ and $u=30xy(1-x)(1-y)$,
respectively. The numerical results show that the convergence
for the solution of the primal-dual weak Galerkin algorithm (\ref{PD-WG32-1})-(\ref{PD-WG32-2}) is of order $O(h)$ in
the discrete $H^2$ norm, which is consistent with the established theory.
For the approximation of $u_0$, the convergence in the usual
$L^2$ norm, $L^1$ norm,  and $L^{\infty}$ norm is of the
order $2$. Regarding the approximation of $u_n$, the tables show that it also converges at the rate of $O(h^2)$, as measured by
the $H^1$ norm and $W^{1,1}$ norm for the error function $e=u_h-Q_h u$.

\begin{table}[H]
\begin{center}
\caption{Numerical error and order of convergence for the exact
solution $u=\sin(x)\sin(y)$.}\label{NE:TRI:Case2-1-1}
 \begin{tabular}{|c|c|c|c|c|c|c|}
\hline $1/h$ &  $\|e\|_{2, h} $ & order&
$\|e\|_{L^1}$   & order  & $\|e\|$  & order
\\
\hline 1 & 0.1530& & 0.04214   & &     0.02633 &
\\
\hline 2 & 0.09251 & 0.7254 & 0.009863& 2.095 & 0.006575   & 2.002
\\
\hline 4 & 0.04197& 1.140 & 0.002260& 2.126& 0.001546& 2.089
\\
\hline 8 & 0.01908& 1.137 & 0.0005295 & 2.093 & 0.0003674&  2.073
\\
\hline 16 & 0.008969 &1.089& 0.0001275 & 2.054& 8.943E-05&     2.038
\\
\hline 32 & 0.004336 &1.048 &3.127E-05&2.028& 2.207E-05&   2.019
\\
\hline
\end{tabular}\end{center}
\end{table}

\begin{table}[H]
\begin{center}
\caption{Numerical error and order of convergence for the exact
solution $u=\sin(x)\sin(y)$.}\label{NE:TRI:Case2-1-2}
 \begin{tabular}{|c|c|c|c|c|c|c|}
\hline $1/h$ &  $\|e\|_{1,h}$ & order&   $\|e\|_{L^\infty}$ & order  &
$\|e\|_{W^{1,1}}$  & order
\\
\hline 1&  0.03160 &&  0.06993 &&  0.04888 &
\\
\hline 2&   0.02294 &0.4622 &0.02210 &1.662&0.03219&   0.6025
\\
\hline 4&   0.005248 &2.128&0.005788 &1.933 &0.006907  &2.221
\\
\hline 8&   0.001134 &2.210&0.001446 &2.001&0.001475   &2.227
\\
\hline 16&  0.0002570 &2.142&0.0003619&1.999&0.0003446 &2.098
\\
\hline 32&  6.088E-05&2.078 &9.065E-05&1.997   &8.339E-05& 2.047
\\
\hline
\end{tabular}\end{center}
\end{table}

\begin{table}[H]
\begin{center}
\caption{Numerical error and order of convergence for the exact
solution $u=\cos(x)\cos(y)$.}\label{NE:TRI:Case2-2-1}
 \begin{tabular}{|c|c|c|c|c|c|c|}
\hline $1/h$ &  $\| e\|_{2, h} $ & order&
$\|e\|_{L^1}$   & order  & $\|e\|$  & order
\\
\hline
1 & 0.04831 &&0.07023 && 0.04350 &\\
\hline 2& 0.06093 &-0.3348 &0.009188& 2.934 & 0.006343     &2.778
\\
\hline
4&0.03252&  0.9060  &0.001543   &2.574& 0.001142 &  2.473 \\
\hline 8 & 0.01637 &   0.9897  &0.0003096  &2.318&0.0002416
&2.241
\\
\hline 16 & 0.008254   &0.9883 & 6.91403E-05   &2.1623 & 5.554E-05
&2.121
\\
\hline 32& 0.004155    &0.9900 &   1.633E-05&  2.082&  1.330E-05&
2.062
\\
\hline
\end{tabular}\end{center}
\end{table}

\begin{table}[H]
\begin{center}
\caption{Numerical error and order of convergence for the exact
solution $u=\cos(x)\cos(y)$.}\label{NE:TRI:Case2-2-2}
 \begin{tabular}{|c|c|c|c|c|c|c|}
\hline $1/h$ &  $\|e\|_{1,h}$ & order&   $\|e\|_{L^\infty}$ & order  &
$\|e\|_{W^{1,1}}$  & order
\\
\hline 1& 0.2011 &&    0.1059 &&   0.3093  &
\\
\hline 2& 0.04311 &    2.222&  0.01900 &2.479& 0.04857 &   2.671
\\
\hline 4& 0.009095 &   2.245&  0.004171    &2.188& 0.01019&2.254
\\
\hline 8& 0.001941&    2.228 & 0.001027    &2.021 &0.002217 &  2.120
\\
\hline 16& 0.0004384&  2.147&  0.0002556   &2.007& 0.0005157 &2.104
\\
\hline 32& 0.0001039&  2.077 & 6.363E-05   &2.006 &0.0001243
&2.052
\\
\hline
\end{tabular}
\end{center}
\end{table}

\begin{table}[H]
\begin{center}
\caption{Numerical error and order of convergence for the exact
solution $u=30xy(1-x)(1-y)$.}\label{NE:TRI:Case2-3-1}
 \begin{tabular}{|c|c|c|c|c|c|c|}
\hline $1/h$ &  $\| e\|_{2, h} $ & order&
$\|e\|_{L^1}$   & order  & $\|e\|$  & order
\\
\hline 1&18.87 &&  4.443 &&    2.749   &
\\
\hline 2 & 9.260 &1.027&0.5741&2.952&0.3994&2.783
\\
\hline 4& 4.272&1.116 &0.09675&2.569 & 0.07174     &2.477
\\
\hline 8 & 2.065&1.049&0.01927&2.3281  &0.01511    &2.247
\\
\hline 16 & 1.022&1.015&0.004261 &2.177&0.003451&2.131
\\
\hline 32 & 0.5099&1.003&0.0009992&2.092   &0.0008205& 2.072
\\
\hline
\end{tabular}\end{center}
\end{table}

\begin{table}[H]
\begin{center}
\caption{Numerical error and order of convergence for the exact
solution $u=30xy(1-x)(1-y)$.}\label{NE:TRI:Case2-3-2}
 \begin{tabular}{|c|c|c|c|c|c|c|}
\hline $1/h$ &  $\|e\|_{1,h}$ & order&   $\|e\|_{L^\infty}$ & order  &
$\|e\|_{W^{1,1}}$  & order
\\
\hline
1& 17.60    &&  6.429   && 23.57    & \\
\hline 2& 4.154&2.083&1.118&2.524 &4.759  & 2.308
\\
\hline 4& 0.91502 &2.183& 0.2643 & 2.080 &1.072 &2.150
\\
\hline 8& 0.2011 &2.186&   0.06844     &1.949 &    0.2432  &2.140
\\
\hline 16& 0.04598 &2.129  &0.01731    &1.983  &0.05736 &2.084
\\
\hline 32& 0.01093 &2.073 &    0.004323&2.002&0.01392& 2.043
\\
\hline
\end{tabular}
\end{center}
\end{table}

%[1,2,1;3,2,0;3,1,31;2,4,2;4,3,12]

Table \ref{NE:TRI:Case3-1-1} demonstrates the performance of the
PD-WG algorithm (\ref{PD-WG32-1})-(\ref{PD-WG32-2}) when the boundary conditions are set as follows: (1) Dirichlet and Neumann on the boundary segment $(0,1)\times 0$, (2) Dirichlet on the boundary segment $0\times (0,1)$, and Neumann on the boundary segment $(0,1)\times 1$. Table \ref{NE:TRI:Case3-1-1}
shows the numerical results when the exact solution is given by
$u=\sin(x)\sin(y)$.  These numerical results demonstrate that the
convergence for the primal-dual weak Galerkin solution in the discrete $H^2$ norm is of order $O(h)$, which is in perfect consistency with the theory. For the approximation of $u_0$, the numerical convergence in
the usual $L^2$ and $L^1$ norms arrives at a rate faster than $O(h^2)$.

\begin{table}[H]
\begin{center}
\caption{Numerical error and order of convergence for the exact
solution $u=\sin(x)\sin(y)$.}\label{NE:TRI:Case3-1-1}
 \begin{tabular}{|c|c|c|c|c|c|c|}
\hline $1/h$ &  $\| e\|_{2, h} $ & order&
$\|e\|_{L^1}$   & order  & $\|e\|$  & order
\\
\hline 1 &  0.1564     &&  0.05116     &&  0.03204 &
\\
\hline 2 &  0.09258    &0.7565 &0.01478    &1.791 &    0.01009
&1.667
\\
\hline 4 &  0.04196&   1.142 &0.003668 &2.011& 0.002651 &1.928
\\
\hline
8 &  0.01877 &  1.160 & 0.0007335  &2.322&  0.0005444 &2.284 \\
\hline 16 &  0.008872 &1.081 & 0.0001421 & 2.368& 0.0001075 &2.340
\\
\hline 32 &  0.004313  &1.040 &    2.820E-05& 2.334& 2.226E-05&
2.272
\\
\hline
\end{tabular}\end{center}
\end{table}

%1,2,1;3,2,0;3,1,2;2,4,2;4,3,1

Tables \ref{NE:TRI:Case4-1-1}-\ref{NE:TRI:Case4-1-2} demonstrate the
performance of the algorithm (\ref{PD-WG32-1})-(\ref{PD-WG32-2}) when the boundary conditions are set as the
following: the boundary segments $0\times (0,1)$ and $1\times (0,1)$ are given by both the Dirichlet and Neumann boundary conditions for the exact
solution $u=30xy(1-x)(1-y)$. The numerical results show that the convergence for the solution of the primal-dual weak Galerkin algorithm in the discrete $H^2$ norm is of order $O(h)$. For the approximation $u_0$, the
convergence in the usual $L^2$ norm, $L^1$ norm and $L^{\infty}$
norm is at the rate of $O(h^2)$. When it comes to $u_n$, the numerical convergence is clearly at the rate of $O(h^2)$, as shown in the discrete $H^1$ and $W^{1,1}$ norms in the table.

\begin{table}[H]
\begin{center}
\caption{Numerical error and order of convergence for the exact
solution $u=30xy(1-x)(1-y)$.}\label{NE:TRI:Case4-1-1}
 \begin{tabular}{|c|c|c|c|c|c|c|}
\hline $1/h$ &  $\| e\|_{2, h} $ & order&
$\|e_0\|_{L^1}$   & order  & $\|e\|$  & order
 \\
 \hline
1&   31.67  &&  0.625   &&  0.4419  &
 \\
 \hline
2&   10.16&1.641&0.3859&0.6958&0.2411   &0.8744  \\
 \hline
4&   3.937 &1.367 &0.1150 &1.746&0.07084&1.767
 \\
 \hline
8&  1.946 &1.016  &0.0287&2.003 &0.01809 &1.969
 \\
 \hline
16&  0.9935 &0.9703&0.006835 &2.070&0.004441    &2.026
 \\
 \hline
 32&     0.5027 &0.9827 &0.001651&2.050&0.001096 &2.018
  \\
 \hline
 \end{tabular}\end{center}
\end{table}

\begin{table}[H]
\begin{center}
\caption{Numerical error and order of convergence for the exact
solution $u=30xy(1-x)(1-y)$.}\label{NE:TRI:Case4-1-2}
 \begin{tabular}{|c|c|c|c|c|c|c|}
\hline $1/h$ &  $\|e\|_{1,h}$ & order&   $\|e\|_{L^\infty}$ & order  &
$\|e\|_{W^{1,1}}$  & order
\\
\hline 1& 10.75    &&  1.25    &&  13.75   &
\\
\hline 2& 2.766&1.959 &0.6134 &1.027 &3.63 &   1.920
\\
\hline 4& 0.7151&1.951 &0.1928     &1.670& 0.9166& 1.987
\\
\hline
8& 0.1827 &1.969&0.05771&1.740 &0.2318 &    1.983 \\
\hline
16& 0.04836&1.917 &0.01690 &1.771 &0.06130&1.919  \\
\hline 32& 0.01265&1.935 &0.004636 &1.866 &0.01592&1.945
 \\
 \hline
 \end{tabular}
\end{center}
\end{table}

Table \ref{NE:TRI:Case5-1-1} demonstrate the performance of the PD-WG algorithm (\ref{PD-WG32-1})-(\ref{PD-WG32-2}) when the boundary conditions are set as follows: the boundary
segment $(0,1)\times 0$ is given by both the Dirichlet and Neumann
boundary conditions for three exact solutions
$u_1=\sin(x)\sin(y)$, $u_2=\cos(x)\cos(y)$, and $u_3=30xy(1-x)(1-y)$. All
these numerical results illustrate that the convergence for the
solution of the primal-dual weak Galerkin algorithm in the discrete
$H^2$ norm is at the rate of $O(h)$. This is in great consistency with
the theory established in the previous sections.

\begin{table}[H]
\begin{center}
\caption{Numerical error and order of convergence for the exact
solutions $u_1=\sin(x)\sin(y)$, $u_2=\cos(x)\cos(y)$ and
$u_3=30xy(1-x)(1-y)$.}\label{NE:TRI:Case5-1-1}
 \begin{tabular}{|c|c|c|c|c|c|c|}
\hline $1/h$ & $\| e\|_{2, h}$ for $u_1$  & order&   $\| e\|_{2,
h} $ for $u_2$  & order  & $\| e\|_{2, h} $ for $u_3$  & order
\\
\hline 1& 0.06734  &&  0.09480     &&  6.667   &
\\
\hline
2& 0.07936 &    -0.2371  &0.04827&0.9739 &6.1728&   0.1110 \\
\hline
4& 0.04555& 0.8012 &0.02475&0.9638 &    3.848   &0.6820\\
\hline
8& 0.02338&0.9622&0.01137 &1.122 &1.794 &   1.101 \\
\hline
16&0.01185&0.9807 &0.006011 &   0.9197  &0.8191     &1.131   \\
\hline 32&0.005911 &1.003 &0.003210 & 0.9051 &0.3870 &1.082
 \\
 \hline
 \end{tabular}
\end{center}
\end{table}

Our numerical experiments indicate that the numerical
performance of the primal-dual weak Galerkin finite element scheme
(\ref{PD-WG32-1})-(\ref{PD-WG32-2}) is typically better than what
the theory predicts. We feel that the primal-dual weak Galerkin finite element method is an efficient and reliable numerical method for
the ill-posed elliptic Cauchy problem.

Finally, Figures \ref{fig:mesh}-\ref{fig:mesh4} illustrate the surface plots of the PD-WG approximations together with the numerical Lagrange multipliers for a test case with exact solution $u=\cos(x) \cos(y)$ in mind. The Cauchy condition is imposed on the boundary segment $\Gamma_d=\Gamma_n=(0,0.5)\times 0$. Figure \ref{fig:mesh} shows the solution when the exact boundary data is employed. Figure \ref{fig:mesh2} shows the numerical solutions when the exact boundary data is perturbed by a random noise represented as $0.005*(0.5-Rand)$, where $Rand$ is the MatLab function that generates random numbers in the range $(0,1)$. The purpose of this numerical experiment is to see the sensitivity of the numerical scheme with respect to random noise on the boundary data. It can be seen that the PD-WG scheme works well when the Cauchy data is exact (which is the assumption of the present paper). On the other hand, the scheme seems to be very sensitive to even small perturbations on the boundary data. Readers are invited to draw their own conclusions from these plots. It is evident that a further study is necessary for elliptic Cauchy problems with noise on the boundary data.

\begin{figure}[H]
\centering
\subfigure[PD-WG approximation]{
\label{Fig.sub.2.lv}
\includegraphics [width=0.3\textwidth]{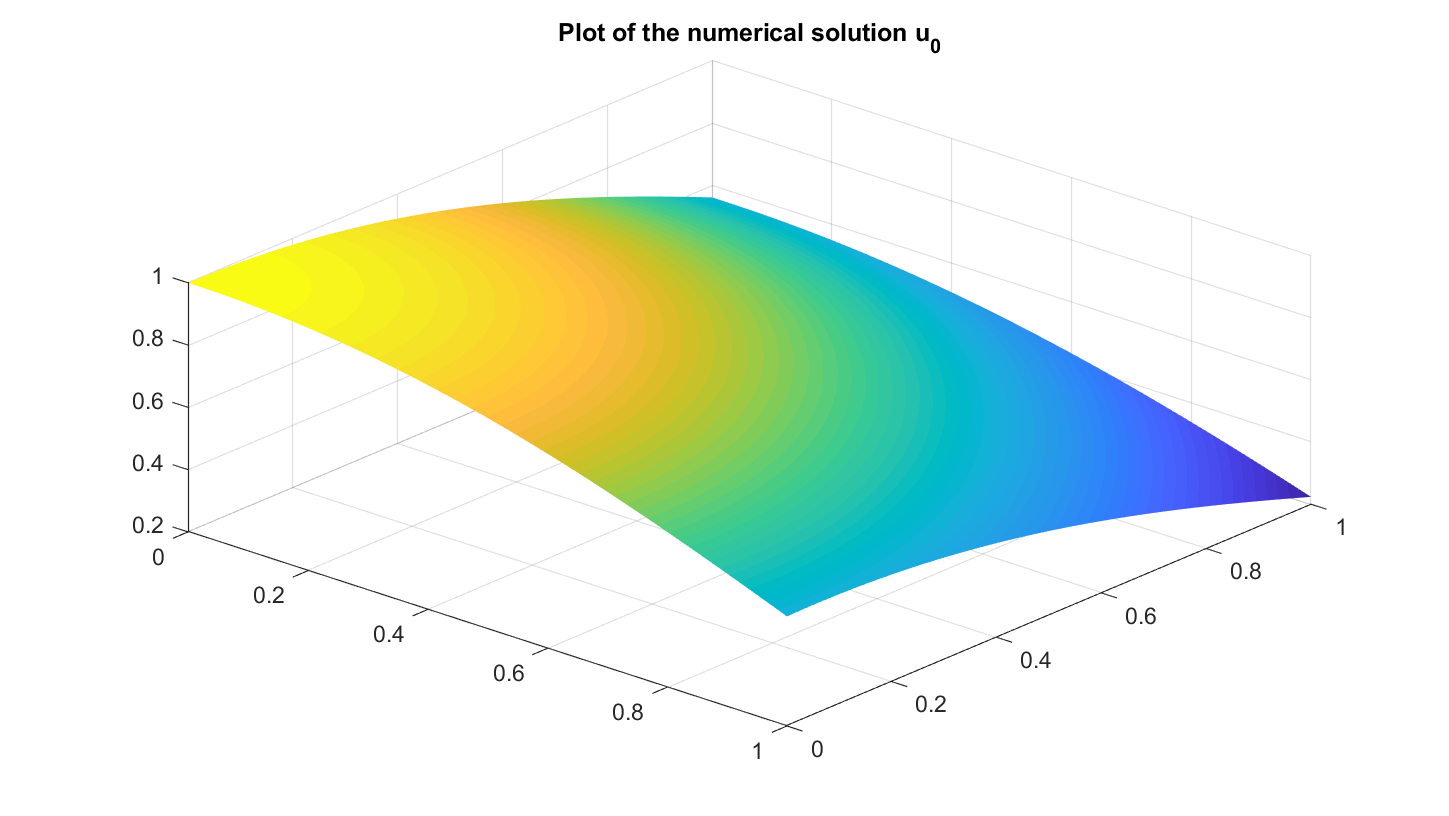}
}
\subfigure[Lagrange multiplier]{
\label{Fig.sub.2.2s}
\includegraphics [width=0.3\textwidth]{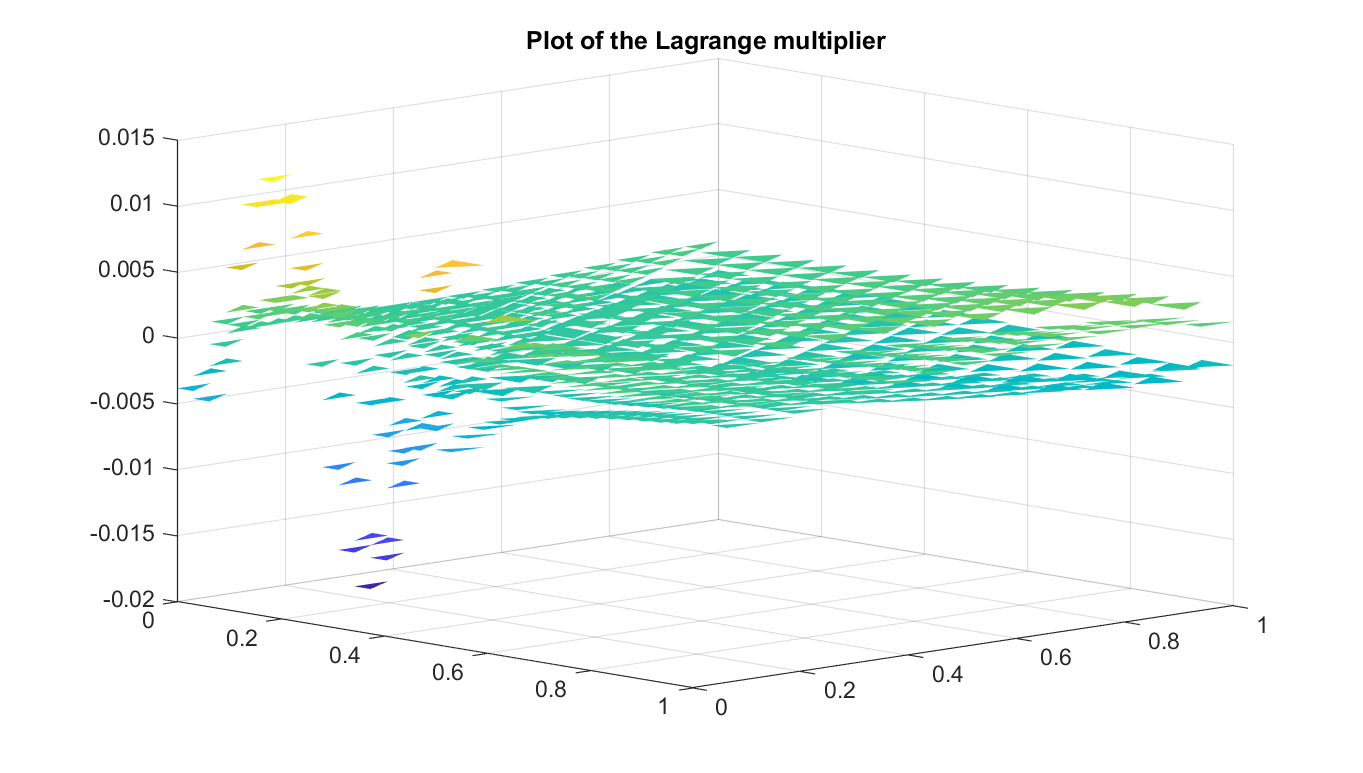}
}
\subfigure[Error function]{
\label{Fig.sub.2.3p}
\includegraphics [width=0.3\textwidth]{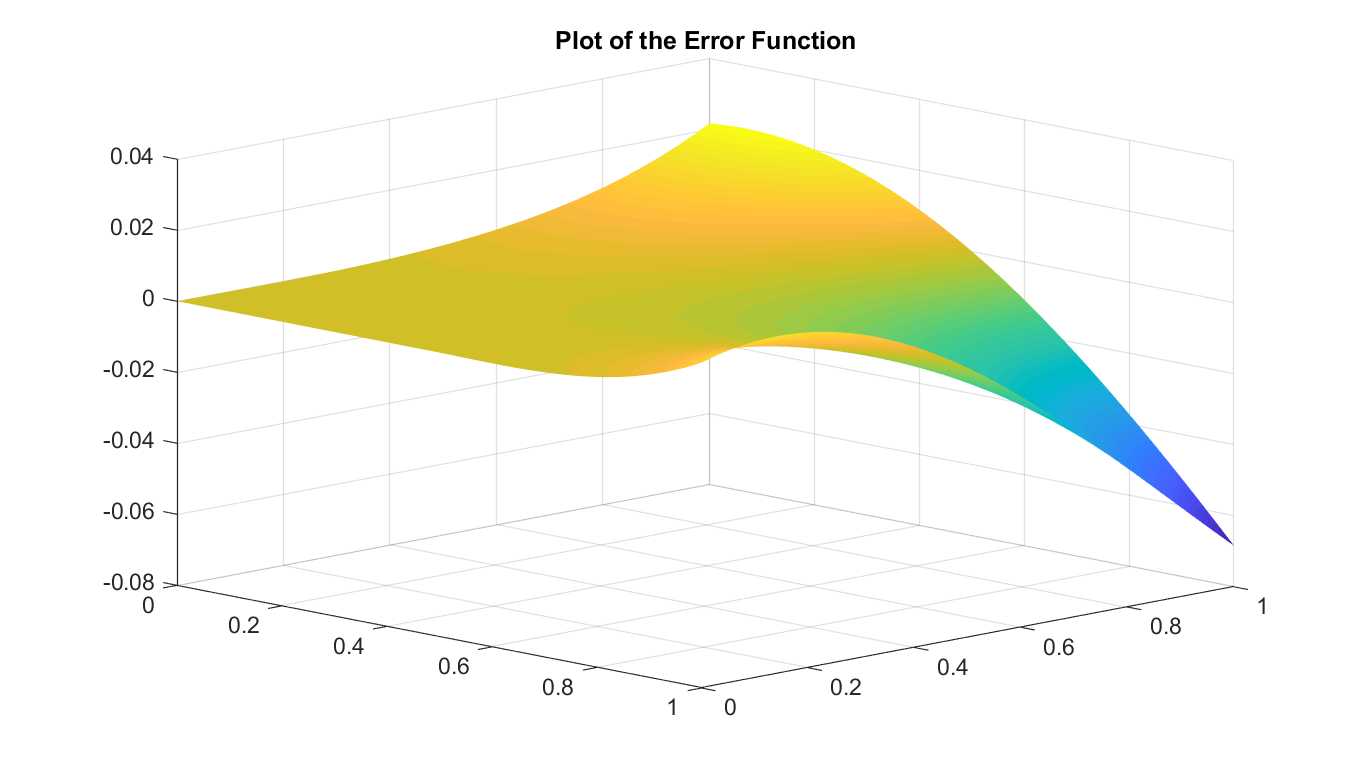}
}
\caption{Numerical results with exact Cauchy data.}
\label{fig:mesh}
\end{figure}

\begin{figure}[H]
\centering
\subfigure[PD-WG approximation]{
\label{Fig.sub.3.lv}
\includegraphics [width=0.3\textwidth]{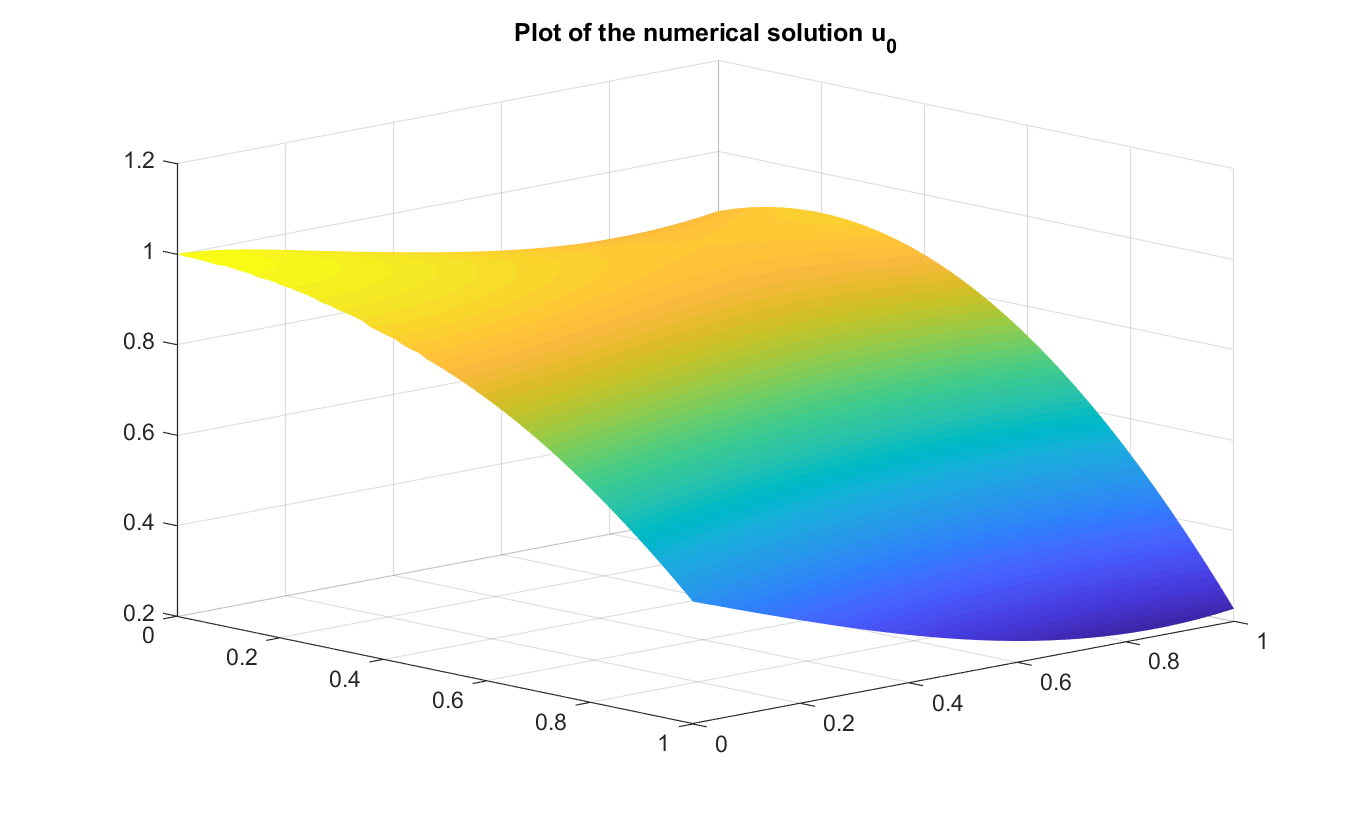}
}
\subfigure[Lagrange multiplier]{
\label{Fig.sub.3.2s}
\includegraphics [width=0.3\textwidth]{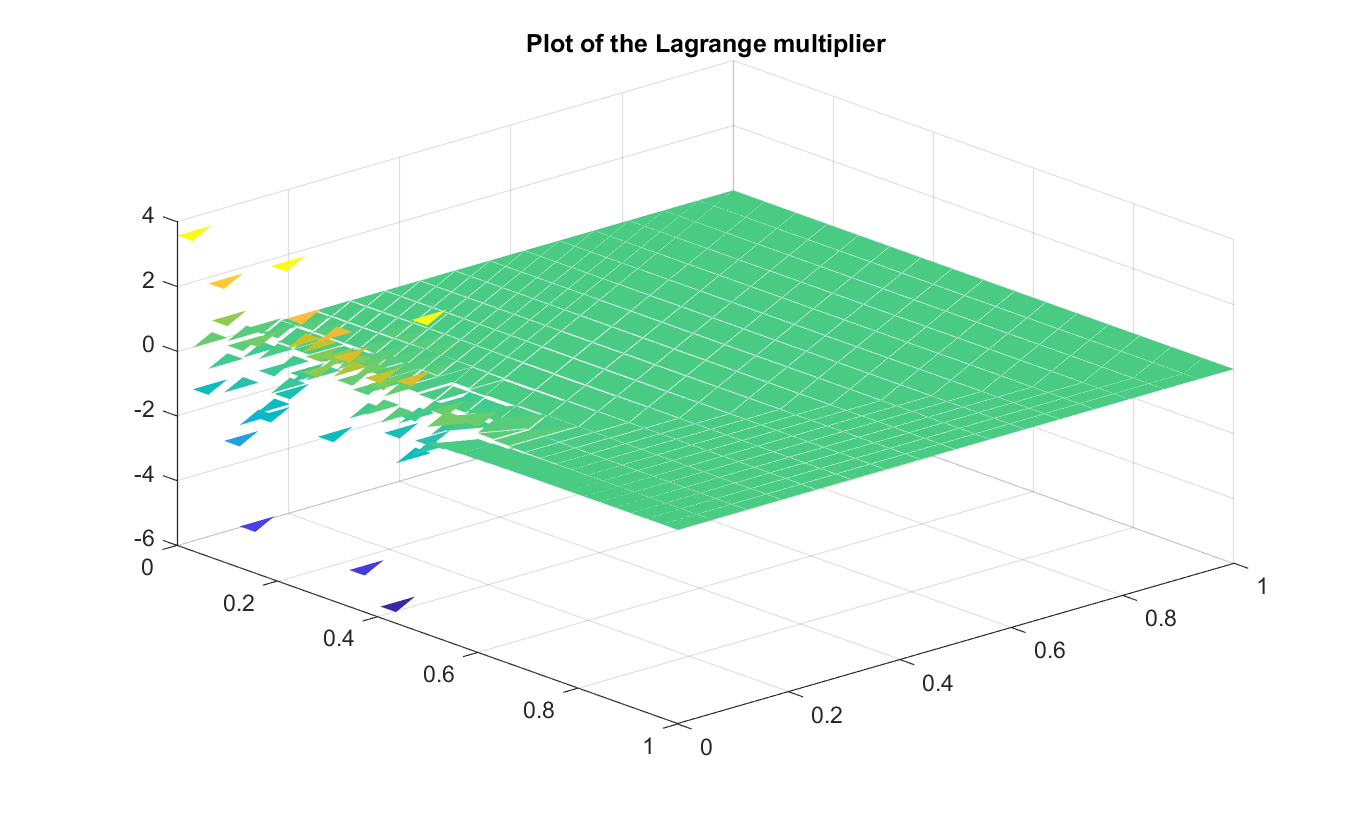}
}
\subfigure[Error function]{
\label{Fig.sub.3.3p}
\includegraphics [width=0.3\textwidth]{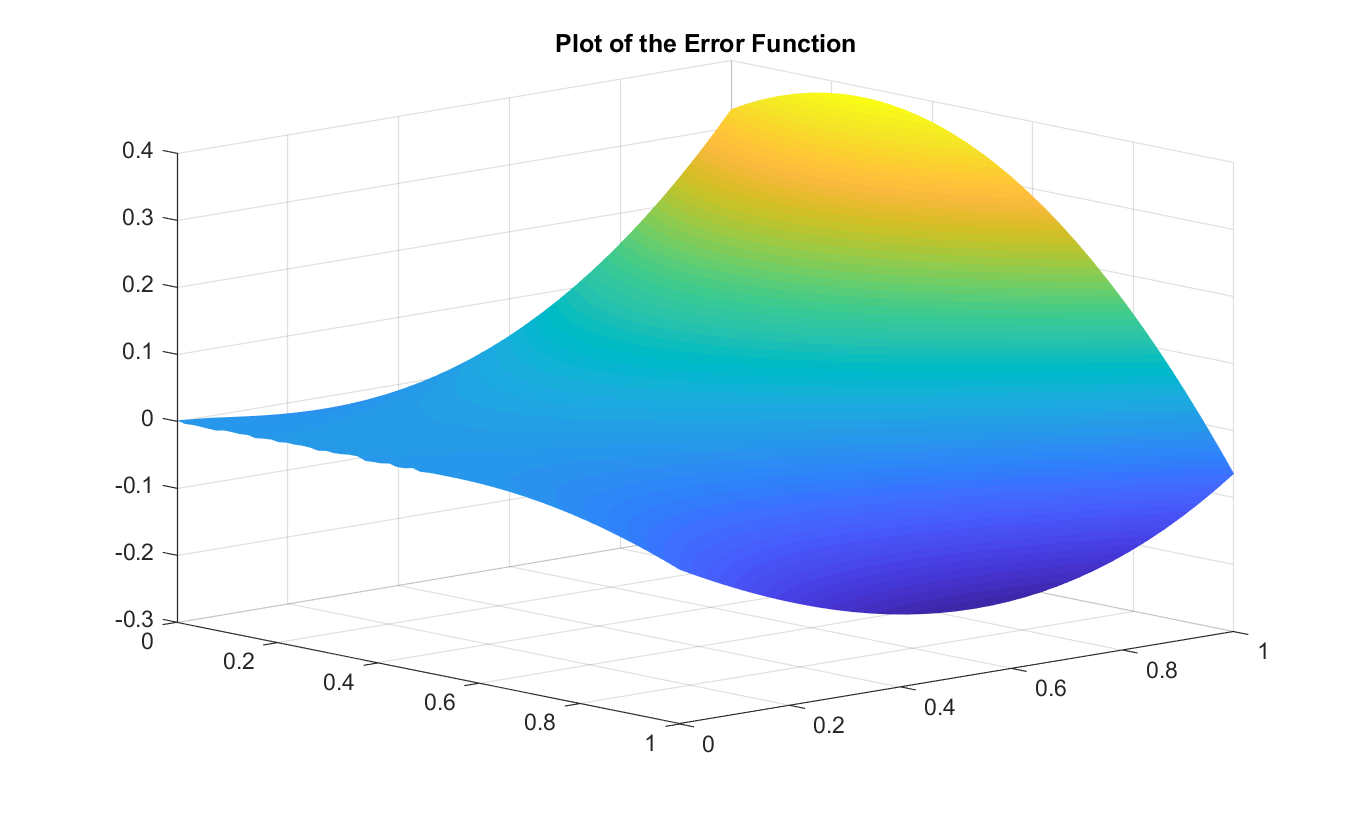}
}
\caption{Numerical results with a random perturbation by $0.005*(0.5-Rand)$ of the Cauchy data.}
\label{fig:mesh2}
\end{figure}

\begin{figure}[H]
\centering
\subfigure[PD-WG approximation]{
\label{Fig.sub.4.lv}
\includegraphics [width=0.3\textwidth]{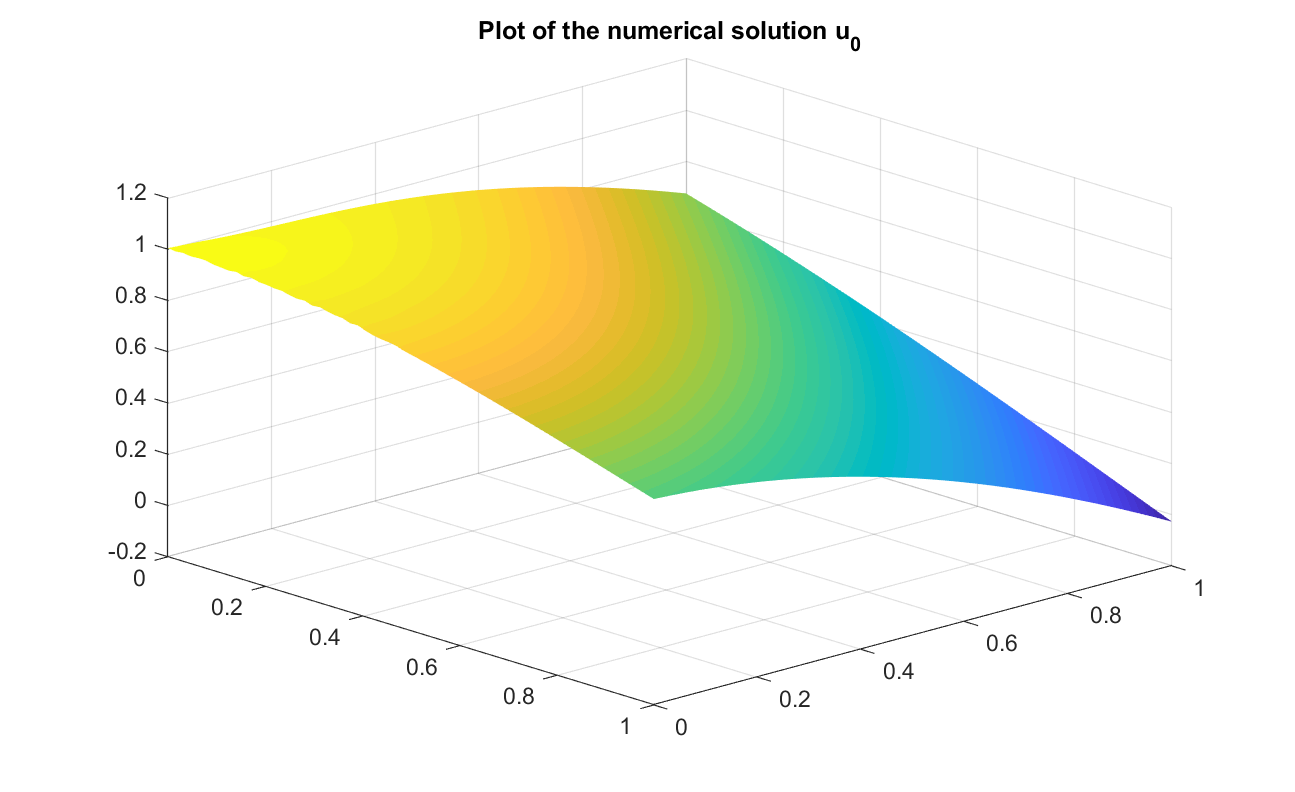}
}
\subfigure[Lagrange multiplier]{
\label{Fig.sub.4.2s}
\includegraphics [width=0.3\textwidth]{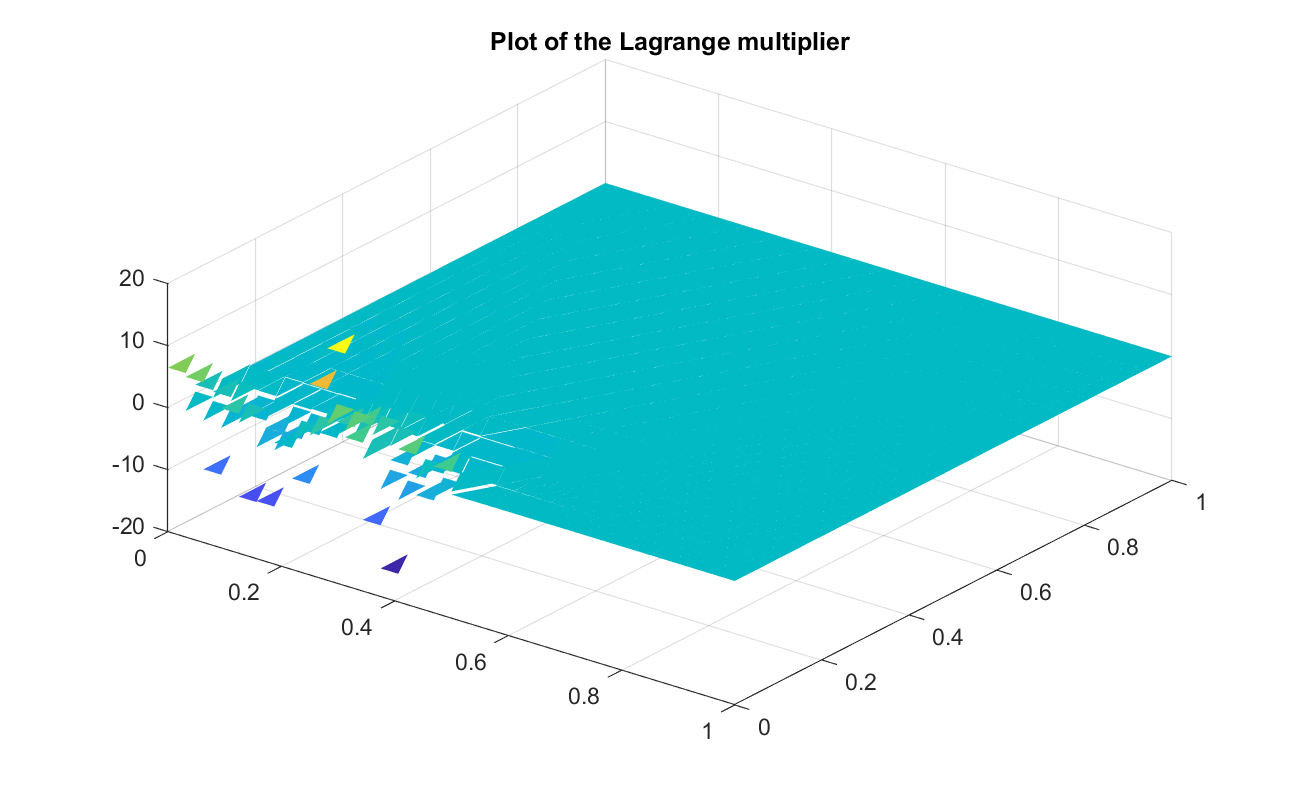}
}
\subfigure[Error function]{
\label{Fig.sub.4.3p}
\includegraphics [width=0.3\textwidth]{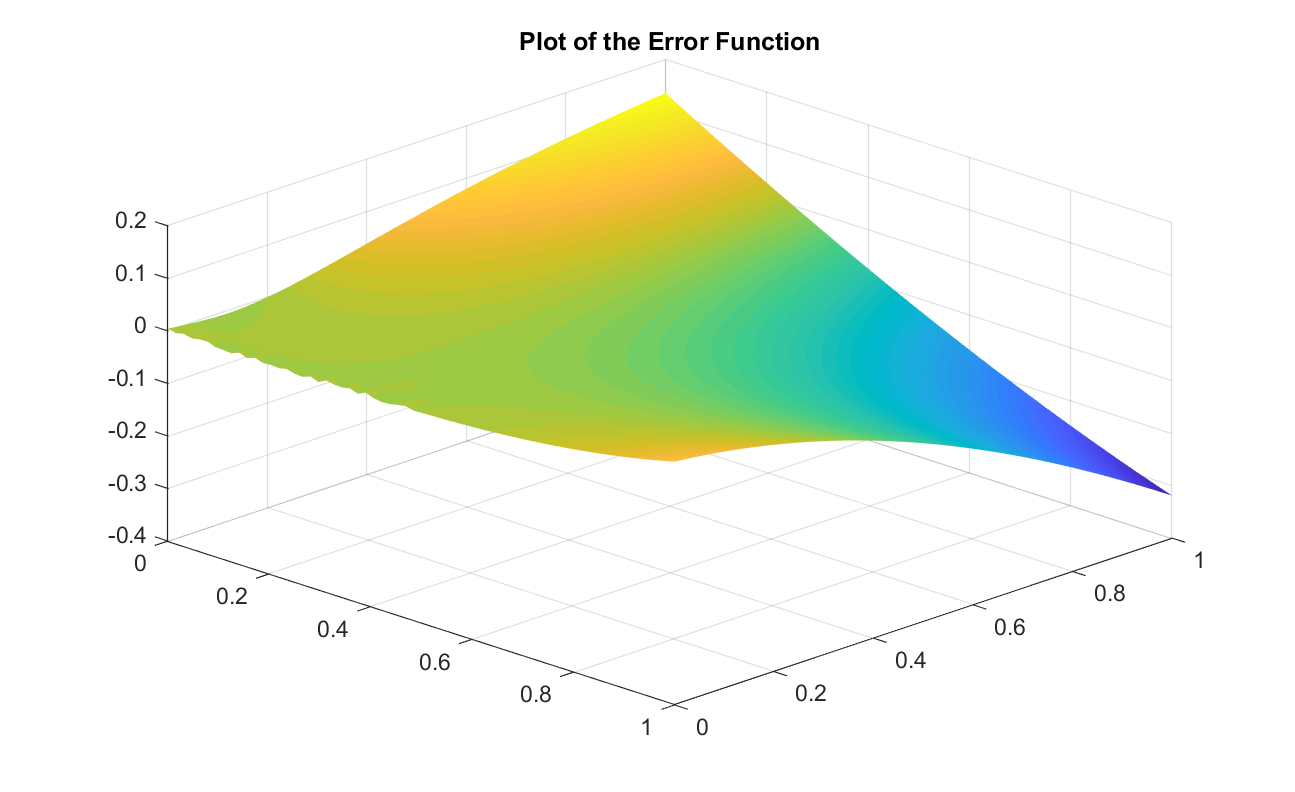}
}
\caption{Numerical results with a random perturbation by $0.01*(0.5-Rand)$ of the Cauchy data.}
\label{fig:mesh3}
\end{figure}

\begin{figure}[H]
\centering
\subfigure[PD-WG approximation]{
\label{Fig.sub.5.lv}
\includegraphics [width=0.3\textwidth]{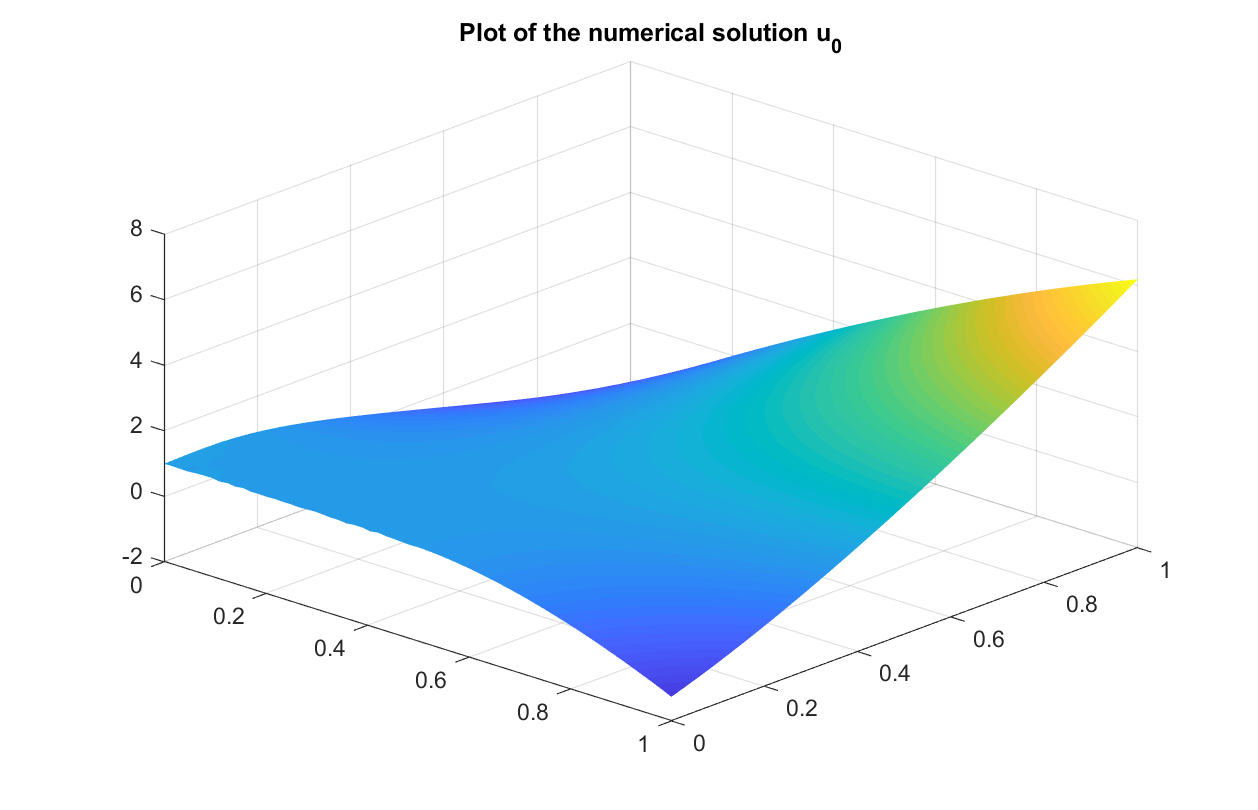}
}
\subfigure[Lagrange multiplier]{
\label{Fig.sub.5.2s}
\includegraphics [width=0.3\textwidth]{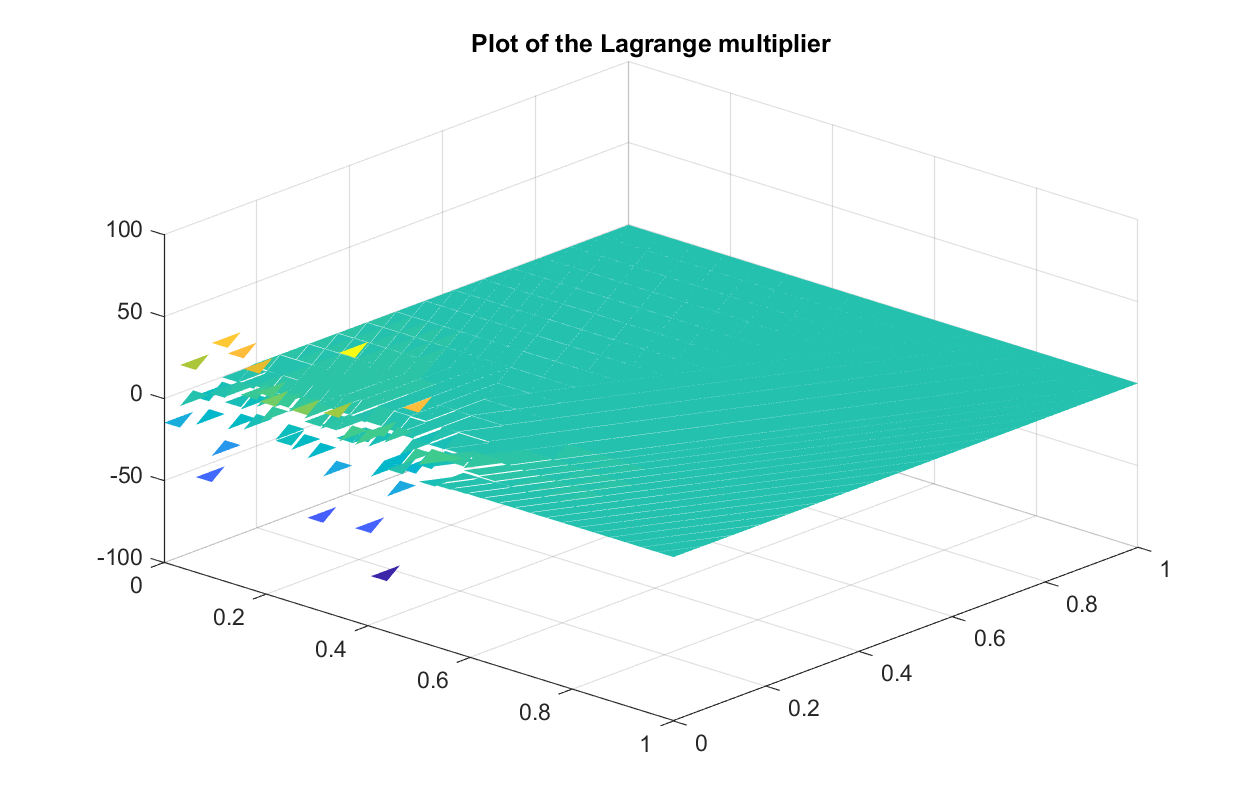}
}
\subfigure[Error function]{
\label{Fig.sub.5.3p}
\includegraphics [width=0.3\textwidth]{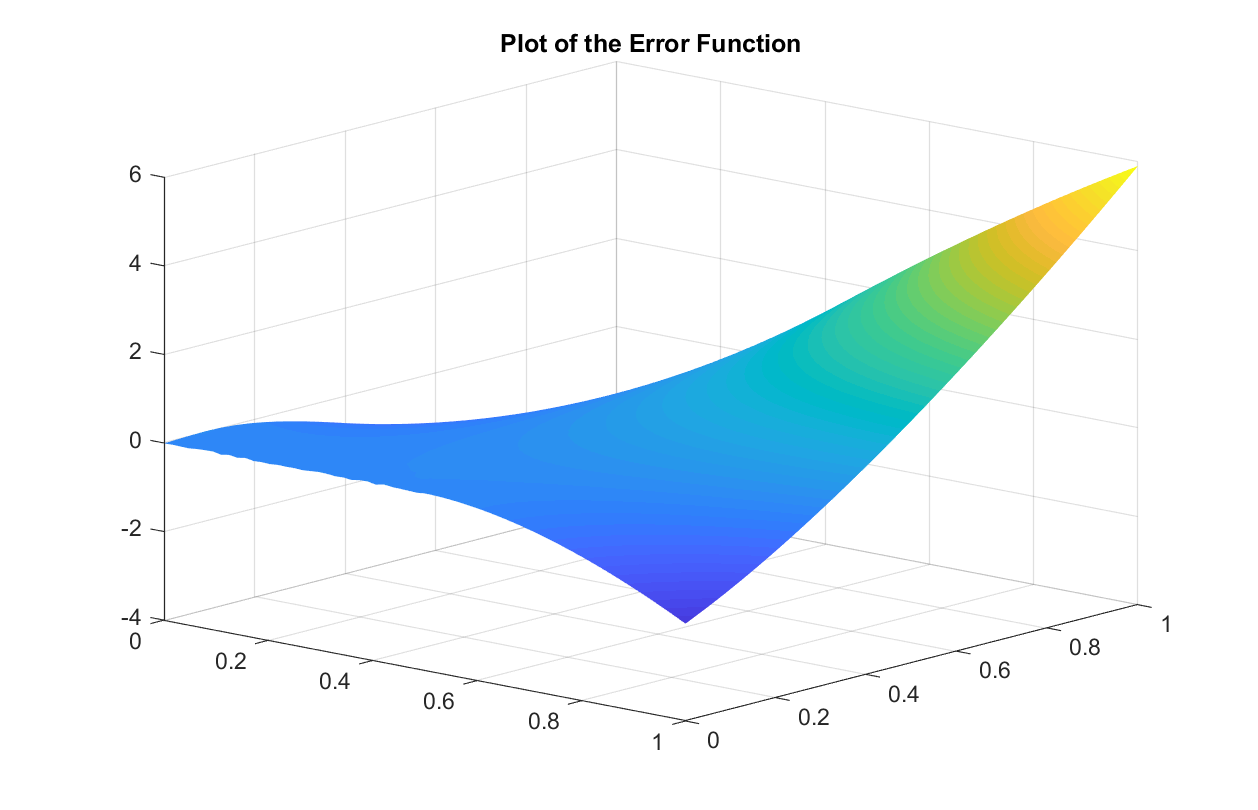}
}
\caption{Numerical results with a random perturbation by $0.05*(0.5-Rand)$ of the Cauchy data.}
\label{fig:mesh4}
\end{figure}

\end{document}